\renewcommand{\d}{{\rm d}}
 \newcommand{\e}{{\rm e} }
 \newcommand{\eps}{\varepsilon}
 \newcommand{\R}{\mathbb{R}}
 \newcommand{\N}{\mathbb{N}}
 \newcommand{\Z}{\mathbb{Z}}
 \newcommand{\E}{\mathbb{E}}
 \renewcommand{\P}{\mathbb{P}}
 \def\1{{\mathchoice {1\mskip-4mu\mathrm l} 
{1\mskip-4mu\mathrm l}
{1\mskip-4.5mu\mathrm l} {1\mskip-5mu\mathrm l}}}
\newcommand{\ignore}[1]{}
\newcommand{\heap}[2]{\genfrac{}{}{0pt}{}{#1}{#2}}
\newcommand{\ssup}[1] {{\scriptscriptstyle{({#1}})}}
\newtheorem{theorem}{Theorem}[section]
\newtheorem{lemma}[theorem]{Lemma}
\theoremstyle{remark}
\newtheorem{Assume}[theorem]{Assumption} 
\newtheorem{prop}[theorem]{Proposition}
\newtheorem{remark}{{\slshape\sffamily Remark}}[]
\begin{document}

\begin{frontmatter}
\title{Central limit theorem for Gibbs measures on path spaces including long range and singular interactions and homogenization of the stochastic heat equation}
\runtitle{Gibbs measures on path spaces including long range and singular interactions}

\begin{aug}
\author[A]{\fnms{} \snm{Chiranjib Mukherjee}\ead[label=e1]{chiranjib.mukherjee@uni-muenster.de}},
\address[A]{Department of Mathematics and Computer Science,
University of Muenster}

\end{aug}

\begin{abstract}
We consider a class of Gibbs measures defined with respect to increments $\{\omega(t)-\omega(s)\}_{s<t}$ of $d$-dimensional Wiener measure, with
the underlying Hamiltonian carrying interactions of the form $H(t-s,\omega(t)-\omega(s))$ that are invariant under uniform translations of paths. 
In such interactions we allow {\it{long-range}} dependence in the time variable (including power law decay up to $t\mapsto (1+t)^{-(2+\eps)}$ for $\eps>0$) 
and {\it{unbounded (singular)}} interactions (including singularities of the form
 $x\mapsto 1/|x|^p$ in $d\geq 3$ or $x\mapsto \delta_0(x)$ in $d=1$) attached to the space variables. 
These assumptions on the interaction seem to be sharp and cover quantum mechanical models
like the  Nelson model and the polaron problem with ultraviolet cut off (both carrying bounded spatial interactions with
power law decay in time) as well as  the Fr\"ohlich polaron with a short range interaction in time but carrying Coulomb singularity
 in space. In this set up, we develop a unified approach for proving a central limit theorem for the rescaled process of increments for any coupling parameter 
 and obtain an explicit expression for the limiting variance which is strictly positive.

As a further application, we study the solution of the multiplicative-noise stochastic heat equation in spatial dimensions $d\geq 3$. When the noise is mollified both in time and space, we show that the averages of the 
diffusively rescaled solutions converge pointwise to the solution of a diffusion equation whose coefficients are homogenized in this limit.

\end{abstract}

\begin{keyword}[class=MSC2020]
\kwd[Primary ]{60F05, 60J65, 60J65,60H15, 60H30, 60J55}
\end{keyword}

\begin{keyword}
\kwd{Gibbs measures, Nelson model, Fr\"ohlich Polaron, singular interactions, central limit theorem, long-range correlations, stochastic heat equation, homogenization, annealed central limit theorem}
\kwd{second keyword}
\end{keyword}

\end{frontmatter}

\section{Introduction and motivation}

\subsection{Gibbs measures on increments of path spaces} The theory of Gibbs measures for lattice spin models or continuous point processes
is a well established subject of statistical mechanics \cite{G88,F88}.
On the other hand, Gibbs measures on {\it{path spaces}}, which are strongly inspired by their quantum mechanical applications,
 have also been studied extensively over the last decade-- see the classical book by Spohn \cite{S04} for a general reference, and \cite{N64,B03,BHLMS02,BL03,BLS05} for functional-analytic approaches
and Betz- Spohn \cite{BS05} and Gubinelli \cite{G06} (see also \cite{GL09}) for probabilistic approaches. Before turning to the background, let us shortly describe 
the mathematical layout of these models, which is 
obtained by perturbing the law $\P$ of $d$-dimensional Brownian increments by the exponential of a finite-volume energy of the form 
 \begin{equation}\label{H-scr}
 \mathscr H_T= -\int_0^T\int_0^T \d s \, \d t \,\, H\big(t-s, \omega(t)-\omega(s)\big)
 \end{equation}
 leading to the (finite volume) Gibbs measure 
  \begin{equation}\label{Q-beta-T}
  \d \widehat{\mathbb Q}_{\alpha,T}:= \frac 1 {Z_{\alpha,T}} \e^{-\alpha \mathscr H_{T}} \d\P. 
 \end{equation}
Here $\alpha>0$ is the {\it{coupling parameter}} (or the {\it{inverse temperature}})
and $\P$ is the law of a $d$-dimensional Brownian 
increments which is defined on the $\sigma$-algebra generated by $(\omega(t)-\omega(s)\colon 0 \leq s < t \leq T))$ on the time interval $[0,T]$.
Also, $Z_{\alpha,T}= \E^{\P}[\exp\{-\alpha \mathscr H_T\}]$
is the normalizing constant or the {\it{partition function}}. The goal of the present article is to establish existence and uniqueness of the infinite-volume Gibbs measures 
$\widehat{\mathbb Q}_\alpha:=\lim_{T\to\infty}\widehat{\mathbb Q}_{\alpha,T}$, analyze the behavior of this limit and prove central limit theorems of the distribution of the rescaled increments under both $\widehat{\mathbb Q}_{\alpha,T}$ and $\widehat{\mathbb Q}_\alpha$, i.e.,  to prove (see Theorem \ref{thm2}) for any coupling parameter $\alpha>0$,
\begin{equation}\label{eq:CLT}
\begin{aligned}
\lim_{T\to\infty}\widehat{\mathbb Q}_{\alpha,T} \,\, \bigg[\frac{\omega(T)-\omega(0)}{\sqrt T} \in \cdot \bigg] &=\lim_{T\to\infty} \widehat{\mathbb Q}_{\alpha} \bigg[\frac{\omega(T)-\omega(0)}{\sqrt T}\in \cdot\bigg]\\
&= \mathbf N_d(0, \sigma^2(\alpha) \mathbf I_{d\times d}) \quad\mbox{with}\quad \sigma^2 (\alpha) >0. 
\end{aligned}
\end{equation}
Our results hold for interactions which are rotationally symmetric in 
$x\mapsto H(\cdot,x)$, carry {\it long-range} dependence in the time variable $t\mapsto H(t,\cdot)$ (allowing a power law decay up to $(1+|t|)^{-(2+\eps)}$, see Assumption \ref{assumeA}), or include singularities in the space variable $x\mapsto H(\cdot,x)$ of Coulomb or Dirac type (see Assumption \ref{assumeB}). Using different methods, there have also been previous results (as already mentioned, see e.g. \cite{BS05,G06}) for Gibbs measures with interactions of the form \eqref{H-scr} showing validity of (functional) CLTs. The approach developed presently requires less restrictive assumptions on the interaction $H(\cdot,\cdot)$, covers spatial singularities in the latter, needs slower decay in time (which is conceivably sharp), and does not require the coupling parameter $\alpha$ to be small for the validity of the CLT (cf. Section \ref{comparison} for  a discussion on previous results and the present method of proof). Finally, Theorem \ref{thm2} has been recently applied in \cite{DS19} for rigorously establishing the relation $m_{\mathrm{eff}}(\alpha)=1/\sigma^2(\alpha)$ between the so-called {\it effective mass} and the CLT variance see Remark \ref{rmk-mass}). Also, the existence of the infinite volume limit 
$\lim_{T\to\infty}\widehat{\mathbb Q}_{\alpha,T}$ in Theorem \ref{thm2} has been used in \cite{MV18b} for showing convergence of rescaled  {\it strong coupling limit} 
$\lim_{\alpha\to\infty}(\lim_{T\to\infty}\widehat{\mathbb Q}_{\alpha,T})$ towards Pekar type process, see Remark \ref{sec-strong}. 

\subsection{Quantum mechanical background and the Nelson model} Before turning to formal statements of the main results, we would like to expand a bit on the physical background of the problems which inspired the present work. Gibbs measures of the  form \eqref{Q-beta-T} arise naturally while investigating the behavior of a quantum particle coupled to a scalar boson field, which was studied in a seminal work of Nelson \cite{N64} in the context of energy renormalization. Mathematically, the scalar boson field translates to an infinite-dimensional Ornstein-Uhlenbeck (OU) process $\{\varphi(x,t)\}_{x\in \R^d, t>0}$ with covariance structure 
\begin{equation}\label{eq0:Nelson}
\begin{aligned}
\int \varphi(x,t)\, \varphi(y,s) \, \mathbf P^{\mathrm{OU}}(\d\varphi)&=  \int_{\R^3} \d k\,\, |\widehat\rho(k)|^2\, \frac 1 {2\omega(k)} \, \e^{-\omega(k) |t-s|} \,\, \e^{\mathrm i k\cdot (x-y)} \\
&=: H(t-s, x-y).
\end{aligned}
\end{equation}
Here $\widehat\rho$ denotes the Fourier transform of the mass distribution of the quantum particle, while $\omega$ stands for the Phonon dispersion relation. 
 Now with a Hamiltonian $-\frac 12 \Delta+ e \varphi(x,t)$, the Feynman-Kac formula 
leads to the path measure  
\begin{equation}\label{eq1:Nelson}
\frac 1 {Z_T} \, \exp\bigg\{- e\int_0^T \varphi\big(W(t),t)\big) \d t\bigg\} \mathbf P^{\mathrm{OU}}(\d\varphi)\otimes\P(\d W) \qquad e>0.
\end{equation}
The exponent above is linear in $\varphi$. Thus integration w.r.t. the Gaussian measure $\mathbf P^{\mathrm{OU}}$, together with \eqref{eq0:Nelson} and \eqref{eq1:Nelson} 
now leads to the Gibbs measure $\widehat{\mathbb Q}_{\alpha,T}$ on the Wiener space defined in \eqref{Q-beta-T} for $\alpha=e^2/2$. 
The interaction \eqref{eq0:Nelson} above for the Nelson model encompasses a large class potentials $H(\cdot,\cdot)$ and to put our work into context, we will 
discuss two particular cases of physical prominence. First, when the dispersion relation $\omega$ in \eqref{eq0:Nelson} is chosen to be $\omega(k)=|k|$ and $\widehat \rho(\cdot)$ is radially symmetric, 
the Nelson model corresponds to the case of {\it massless Bosons} for which the interaction potential becomes 
$H(t,x)= \int_0^\infty \d r \widehat\rho(r) \mathrm e^{- r|t|} \, |x|^{-1}\sin(r|x|)$. Now the choice $\widehat\rho(r)=\mathrm e^{-r}$ leads to $H(t,x)=\frac 1 {|x|^2+ (1+|t|)^2} $ which, although being bounded in the space variable, carries the aforementioned 
long-range dependence of the time decay $t\mapsto H(t,\cdot)$.
Next, another particular case of the Nelson model is that of the {\it Fr\"ohlich Polaron} \cite{S86} which is an electron coupled to the optical modes of an ionic crystal. Mathematically, the Polaron is a particular case of \eqref{eq0:Nelson} when we choose $\omega(k)=\omega_0$ and $\widehat\rho(k)=|k|^{-1}$  that makes $H(t,x)= \frac{\e^{-\omega_0 |t|}}{|x|}$, which although decaying exponentially fast in time, now carries a Coulomb singularity in space variable $x\mapsto H(\cdot,x)$. 
In light of the above discussion, in the present context we propose a general method of analyzing Gibbs measures on path spaces that descend from covariance structures like \eqref{eq0:Nelson} including unbounded and slowly decaying potentials discussed above. 

\subsection{Homogenization of the stochastic heat equation}
 As an application, we will also prove a homogenization statement 
 of the stochastic heat equation in $d\geq 3$ with multiplicative white noise smoothened in {\it both space and time}. When the spatial dimension is one, a lot of progress has been made concerning a full understanding of this equation on a precise level (see \cite{BC95,H13,GP17,SS10,AKQ,ACQ11}, see also \cite{BC98,CSZ15} for the two-dimensional case). 
 The pertinent statement in the present context can be formally summarized as follows. Let $\dot B$ denote Gaussian space-time white noise and let $\psi: [0,\infty)\to \R_+$ and $\phi:\R^d\to \R_+$ be two fixed mollifiers (i.e. they are both positive, smooth, rotationally symmetric 
 functions with compact support with total integral one). Let
 $\dot B_1(t,x)= \int_{\R^d}\int_0^\infty \psi(t-s) \phi(x-y) \dot B(s,y)$ denote the mollified noise and let $u_1$ be the solution of  
 the multiplicative noise equation $\partial_t u_1=\frac 12 \Delta u_1 + \beta \, \dot B_1(t,x) u_1$. We set 
 \begin{equation}\label{hat-u}
 \hat u_\eps(t,x)=u_1(\eps^{-2}t,\eps^{-1}x).
 \end{equation}
To motivate the discussion, it is instructive to examine the case if the noise $\dot B$ is mollified only in space (and left white in time with $\psi=\delta_0$). Then by Feynman-Kac formula, time-reversal and 
 by subtracting a {\it deterministic quadratic variation}, $u_1$ can be made a {\it martingale} in $t$:
 \begin{equation}\label{mart}
 \E_x\bigg[\exp\bigg\{\beta \int_0^t  (\phi\star \dot B)(t-s,W_s) \d s - \frac {\beta^2}2 t (\phi\star \phi)(0)\bigg\}\bigg].
\end{equation}
If $\mathbf P$ denotes the law of the noise $\dot B$, then in this (white in time) case, by Brownian scaling, it is easy to see that $\mathbf E^{\mathbf P}[\hat u_\eps(t,x)]=1$. 
In this case, we refer to \cite{MSZ16} for statements pertaining to a phase transition (in $\beta$) of the limiting solution $\hat u_\eps$ as $\eps\to 0$. Moreover, when $\beta>0$ is small, we refer to \cite{MU17,GRZ17,CCM19a,CCM19b} for studies on the fluctuations of $\hat u_\eps$ (and that of the related KPZ solutions) and relations to Gaussian free field, and to \cite{BM17} for a quenched central limit theorem for the path measures of SHE. When $\beta>0$ is large, a contrasting scenario emerges 
as the quenched path measure enters a fully localized phase, see \cite{BM18}. 

However, the present scenario  of space-time mollification is very different where the correction term in the Feynman-Kac representation of the solution is not even deterministic (in contrast to \eqref{mart}) and  the averaged solution admits the representation $ \mathbf E^{\mathbf P}[\hat u_\eps(t,x)]=\E^\P[\exp\{-\beta \mathscr H_T(W)\}]$ with $\mathscr H_T$ of the form \eqref{H-scr} (in contrast and as remarked earlier, this average is constantly one when the noise is left white in time). Then our main result implies that, for any 
 $\beta>0$, $t>0$ and $x\in \R^d$, the rescaled solutions $\hat u_\eps(t,x)=u_1(\eps^{-2}t,\eps^{-1}x)$ homogenize on average as $\eps\to 0$: 
 \begin{equation}\label{intro:homogenize}
\e^{-(\frac t{\eps^2}\theta_0+ \theta_1)}\,\, \mathbf E^{\mathbf P} [\hat u_\eps(t,x)] \to \overline u(t,x) \qquad\mbox{with} \qquad \partial_t \overline u= \frac 12 \mathrm{div}\big(\mathrm{a_\beta} \nabla \overline u\big)
  \end{equation}
for constants $\theta_0, \theta_1$ and  $\mathrm a_\beta$ is the diffusion coefficient, see Theorem \ref{thm1} for a precise statement. It is worth noting that $\hat u_\eps(t,x)$ solves 
\begin{equation}\label{hatueps}
\partial_t \hat u_\eps= \frac 12 \Delta \hat u_\eps+ \beta \eps^{-2} \dot B_1\bigg(\frac t {\eps^2},  \frac x\eps\bigg) \hat u_\eps(t,x).
\end{equation}
On the other hand, the scaling properties of the noise implies that $\eps^{-2} \dot B_1(\eps^{-2} t, \eps^{-1} x) \sim c_0 \eps^{(d-2)/2} \dot B(t,x)$ for some constant $c_0$. However, the limiting behavior of $\hat u_\eps$ as $\eps\to 0$ 
does not correspond to a {\it{weak-noise}} problem which would produce (in the limit $\eps\to 0$) a solution of the heat equation $\partial_t v= \frac 12 \Delta v$ with unperturbed diffusion constant. The statement \eqref{intro:homogenize} underlines that, while the noise formally goes to zero, due to space-time correlations present in the mollification, the noise intensity does influence the diffusivity of the limiting PDE, as reflected by \eqref{intro:homogenize}. Let us now turn to the precise statements of the announced results.

 \section{Main results} 
 
\subsection{Gibbs measures on increments of interacting Wiener paths}

 Let $\Omega=\Omega_T$ denote the space of continuous functions $\omega:[0,T] \to \R^d$ which is equipped with the law
of the {\it Brownian increments} $\P=\P_T$, i.e, $\P$ is defined {\it only on the $\sigma$-algebra generated by the increments}\footnote{Alternatively,  the base measure $\P$ can also be defined as follows. Let $C([0,T]^2;\R^d)$ be the space of continuous functions from $[0,T]^2$ taking values in $\R^d$ and $D\subset C([0,\infty)^2;\R^d)$ is the subset such that $x\in D$ if and only if $x(t,t)=0$ for all $t\in [0,\infty)$ and $x(s,u)+x(u, t)= x(s, t)$. Then $\P$ is the Gaussian measure defined on $D$ such that if $\{X(s, t)\}_{s,t\geq 0}$ is the coordinate mapping process, then $\E^\P [ X(s, t) ]=0$ and $\E^\P[X(s, t)^2]=t-s$ for $t\geq s$ and $X(s, t)$ and $X(r, u)$ are independent under $\P$ if and only if $(s, t)$ and $(r, u)$ are disjoint intervals. This process can therefore be conceived as being driven by the {\it increments of a $d$-dimensional Brownian motion} 
 $(\omega(t))_{t\geq 0}$ by setting $X(s, t)= \omega(t)- \omega(s)$.}
 $$
\big\{\omega(t)-\omega(s)\colon 0\leq s < t \leq T\big\}.
$$
All throughout the rest of the article, $\E=\E^{\P}$ will denote expectation w.r.t. $\P$.

 We consider the finite-volume Gibbs measures 
of the form 
  \begin{equation}\label{eq:Q}
  \begin{aligned}
&\d\widehat{\mathbb Q}_T=   \d\widehat{\mathbb Q}_{\alpha,T}= \frac 1 {Z_{\alpha,T}} \e^{\alpha \mathscr H_{T}} \d\P,\qquad
 \mathscr H_T= \int_0^T\int_0^T \d s \, \d t \,\, H\big(t-s, \omega(t)-\omega(s)\big), \\
&\qquad\qquad Z_T=Z_{\alpha,T}= \E^\P\big[\e^{\alpha \mathscr H_T}\big].
\end{aligned}
    \end{equation}
    We will impose the following conditions on the interaction $H(\cdot,\cdot)$ appearing in the Hamiltonian $\mathscr H_T$. 
    Let $H:[0,\infty)\times \R^d \to \R$ be a function which is radially symmetric in $x$ and satisfies either of the following two conditions. 
    
  \begin{Assume} \label{assumeA}
  There exists $C\in (0,\infty)$ and $\eps>0$ such that 
  \begin{equation}\label{eq:assumeA}
  \sup_{x\in \R^d} |H(t,x)| \leq \frac {C}{ (1+t)^{2+\eps}}.
  \end{equation}
  \end{Assume}

{\it Alternatively}, we will impose the following condition on $H(\cdot,\cdot)$.

  \begin{Assume} \label{assumeB}
  There exist functions $\rho:[0,\infty)\to \R_+$  and $V:\R^d\to \R$ such that $H(t,x)= \rho(t) V(x)$ where $\rho$ is bounded with compact support and 
  \begin{equation}\label{eq:assumeB}  
  V(x)=
  \begin{cases}
\mbox{  bounded (i.e. } \sup_{x\in \R^d} |V(x)| <\infty) \mbox{ for any } d\in \N, \mbox { or }\\
\frac 1 {|x|^p} \mbox{ for any } p \in (0,\frac 2 {d-2}) \mbox{ and } d\geq 3, \mbox{ or }\\
\delta_0(x) \mbox{ for } d=1.
\end{cases}
\end{equation} 
\end{Assume}


 Recall the measure $\widehat{\mathbb Q}_T$ from \eqref{eq:Q}. Here is our first main result, for which we will need to impose   
 
\begin{theorem}\label{thm2}
Impose  \underline{either} Assumption \ref{assumeA} \underline{or} Assumption \ref{assumeB}. Then, for any $\alpha>0$, the limit 
$\widehat{\mathbb Q}_{\alpha}:=\lim_{T\to\infty}\widehat{\mathbb Q}_{\alpha,T}$ exists and weakly as probability measures, 
\begin{equation}\label{eq-thm2}
\begin{aligned}
\lim_{T\to\infty}\widehat{\mathbb Q}_{\alpha,T} \bigg[\frac{\omega(T)-\omega(0)}{\sqrt T}\in \cdot\bigg] &=\lim_{T\to\infty}\widehat{\mathbb Q}_{\alpha} \bigg[\frac{\omega(T)-\omega(0)}{\sqrt T}\in \cdot\bigg] \\
&= \mathbf N\big(\mathbf 0, \sigma^2(\alpha)\mathbf{I}_{d\times d}\big), \qquad \sigma^2(\alpha)>0.
\end{aligned}
\end{equation} 
Here $ \mathbf N\big(\mathbf 0, \sigma^2\mathbf{I}_{d\times d}\big)$ stands for a centered $d$-dimensional Gaussian law with covariance matrix $\sigma^2\mathbf{I}_{d\times d}$. \end{theorem}

\begin{remark}[\bf Effective mass of a quantum particle]\label{rmk-mass}
A physically relevant quantity is the so-called {\it effective mass} which is intimately related with the diffusion co-effecient of the central limit theorem stated in Theorem \ref{thm2}. The effective mass is defined as the 
as the inverse of the curvature:
 $$
 m_{\mathrm{eff}}(\alpha)= \bigg[\frac{\partial^2}{\partial P^2}E_\alpha(P)\big|_{P=0}\bigg]^{-1}
 $$
where $E_\alpha(P)$ is the bottom of the spectrum of the (fiber) Hamiltonian (see e.g. \cite{S86,DS19} for background material) 
of the quantum particle ($E_\alpha(\cdot)$ is rotationally symmetric and 
for Polaron-type interaction (see Section \ref{sec-Polarontype}) as well as for the Fr\"ohlich Polaron (see Section \ref{sec-Polaron} below) it is known that $E_\alpha(P)$ is analytic when $P\approx 0$). Assuming the validity of a CLT with variance $\sigma^2(\alpha)$, Spohn \cite{S86} heuristically argued the validity of the identity  
\begin{equation}\label{massvariance}
m_{\mathrm{eff}}(\alpha)^{-1}= \sigma^2(\alpha).
\end{equation}
Very recently, using the CLT provided by Theorem \ref{thm2}, this identity has been verified rigorously for the Polaron-type interactions appearing in Section \ref{sec-Polarontype} (see \cite[Theorem 3.2]{DS19}), while using the CLT proved in \cite{MV18}, the same identity \eqref{massvariance} has been shown to hold (see \cite[Theorem 3.1]{DS19}) for the Fr\"ohlich Polaron (see Section \ref{sec-Polaron} below for a discussion). 
\end{remark} 

\begin{remark}[\bf Strong coupling limit]\label{sec-strong}
Strong coupling limit corresponds to studying the infinite volume limit $\widehat{\mathbb Q}_\alpha$ as $\alpha\to\infty$. By Brownian scaling, 
this is equivalent to studying $\lim_{\lambda\to 0} \widehat{\mathbb Q}_\lambda= \lim_{\lambda\to 0} \lim_{T\to\infty}\widehat{\mathbb Q}_{\lambda,T}$
where $\widehat{\mathbb Q}_{\lambda,T}$ descends from {\it Kac-type} interaction energy of the form $H(t,x)=\lambda \e^{-\lambda |t|} V(x)$. For these interactions, 
the infinite volume limit $\widehat{\mathbb Q}_\lambda= \lim_{T\to\infty}\widehat{\mathbb Q}_{\lambda,T}$ follows from Theorem \ref{thm2} under Assumption \ref{assumeA}
if $V:\R^d\to \R$ (for any $d\in \N$) is a continuous function vanishing at infinity. 
For such functions $V$ (which is included in the class of models defined below in Section \ref{sec-Polarontype}), or when $V(x)=\frac 1{|x|}$ in $d=3$ (which is the Fr\"ohlich Polaron defined in section \ref{sec-Polaron} for which the existence of infinite volume limit has been shown in \cite{MV18}), 
the vanishing Kac (or the strong coupling) limit $\lambda\to 0$ of $\widehat{\mathbb Q}_\lambda$  has been shown further in \cite{MV18b} to coincide with the increments of the stationary Pekar process, verifying a conjecture of Spohn \cite{S86}.\footnote{The Pekar process is a stationary diffusion process with generator $\frac 12\Delta+(\nabla\phi/\phi)\cdot\nabla$, with $\phi$ being the maximizer of the Pekar variational formula $\sup_{\|\phi\|_2=1}[\int\int_{\R^3\times \R^3} \phi^2(x)\phi^2(y)V(x-y) \d x \d y - \frac 12 \|\nabla\phi\|_2^2]$. Convergence of the infinite volume limit towards the increments of Pekar process also requires the latter variational problem to admit a solution $\phi$ which is unique modulo spatial translation, which is known to hold for $V(x)=1/ {|x|}$ in $d=3$ (see \cite{L76}). As shown in \cite{MV14,KM15,BKM15}, the Pekar process itself emerges as the limiting object of the ``mean-field version" of the Fr\"ohlich polaron, where the path measure is given by $\widehat\P^{\mathrm{mf}}_T(\d\omega)=\frac 1 {Z_T^{\mathrm{mf}}}\exp[\frac 1T\int_0^T\int_0^T \frac{\d s \d t}{|\omega(s)-\omega(t)|}]\P(\d\omega)$.}
\end{remark}

\subsection{\bf Applications to quantum mechanical models}   

Let us now turn to some concrete quantum mechanical models of interest. First remark that Assumption \ref{assumeA} allows the interaction $H(t,x)$ to have long-range dependence (power law decay) in the time variable and requires it to be bounded in the space-variable. However,  {\it it does not} require $H(t,x)$ to have admit any product structure (in time and space coordinates). If $\gamma=2+\eps$, then 
  $1+ t^\gamma + |x|^2 \geq 1+ t^\gamma \geq C (1+t)^\gamma$ for some $C>0$. 
 Hence, Assumption \ref{assumeA}  allows the interaction of the form 
 $$
 H(t,x)= \frac 1 {1+ t^{\gamma}+ |x|^2} \qquad \gamma>2
 $$
  which, as discussed above, is relevant for the Nelson model. We also believe that the power law decay $(1+t)^{2+\eps}$ is sharp : We refer to \cite{OS99} where (Gaussian type) interactions carrying the Boltzmann weight $\exp[-\rho(t)|x|^2]$ are considered and it is shown that if $\rho(t)\simeq (1+t)^{\-\gamma}$ for $\gamma\in (1,2]$ then there are at least two infinite volume Gibbs measures. Since uniqueness of Gibbs measures is refuted, one does not expect a central limit theorem to hold for such slowly decaying potential. \footnote{Since any limiting Gibbs measure(s) belong to the extreme points of stationary processes, having (at least) two such measures denies any of its convex combination to be an extreme point, thereby disallowing the convex combination to possess any ergodic or mixing properties. In the absence of the latter condition, one does not expect validity of a central limit theorem, see e.g. \cite{OS99}.}

  \smallskip

Recall the interaction \eqref{eq0:Nelson}: 
\begin{equation}\label{Htx}
H(t,x)= \int_{\R^d} \d k \, \frac{\widehat\rho(k)^2}{\omega(k)} \, \e^{-\omega(k) |t|}\, \e^{\mathbf i \, k\cdot x} 
\end{equation}
and fix the spatial dimension $d=3$ and rotationally symmetric functions $\widehat\rho$ and $\omega$. 
The following concrete models are of particular interest.

\subsubsection{\bf Nelson model}Using rotational symmetry, the interaction \eqref{Htx} simplifies to :\footnote{Using spherical symmetry and polar coordinates in $d=3$ we have 
$$
\begin{aligned}
H(t,x)&= \int_0^\infty\d r \int_0^{2\pi} \d\phi\int_0^\pi \d\theta r^2 \sin\theta \, \frac{\widehat\rho(r)^2}{\omega(r)} \e^{-\omega(r) |t|}\, \e^{\mathbf i r|x| \cos\theta}\\
&=2\pi\int_0^\infty \d r  \, \frac{r^2 |\widehat\rho(r)|^2}{\omega(r)} \frac {\e^{-\omega(r)|t|}}{\mathbf i r |x|}\,[\e^{\mathbf i r |x|}- \e^{\mathbf i r |x|}] 
= 4\pi \int_0^\infty \d r \frac{r |\widehat\rho(r)|^2}{\omega(r)} \, \e^{-\omega(r)|t|} \bigg(\frac{\sin(r|x|)}{|x|}\bigg).
\end{aligned}
$$}
$$
H(t,x)=4\pi \int_0^\infty \d r \frac{r |\widehat\rho(r)|^2}{\omega(r)} \, \e^{-\omega(r)|t|} \bigg(\frac{\sin(r|x|)}{|x|}\bigg)
$$
Following standard customs, if we choose 
$$
\omega(k)=|k|, \qquad \widehat\rho\in L^2(\R^d)
$$
with the support of $\widehat\rho$ bounded away from zero, then it readily follows that $\sup_{x\in \R^d}| H(t,x)| \leq C \e^{-\delta(|t|+1)}$ for some $\delta>0$, which is clearly satisfied by Assumption \ref{assumeA}. The choice $\omega(k)=|k|$ is sometimes refereed to as the acoustic Polaron.
 \smallskip 

\subsubsection{\bf Polaron type interaction with ultraviolet cut off}\label{sec-Polarontype}  Another physically relevant model of interest is a polaron type interaction which corresponds to the requirement 

\begin{equation}\label{Polarontype}
\omega(k) \geq \omega_0>0 \quad\mbox{ for some } \omega_0>0, \qquad {and }\,\,\, \widehat\rho\in L^2(\R^d) 
\end{equation}
in \eqref{Htx}. The condition $\|\widehat\rho\|_2<\infty$ is referred to as {\it ultraviolet cut off}. Then  it follows that $\sup_{x\in \R^d}| H(t,x)| \leq C\|\widehat\rho\|_2^2 \,  \e^{-\omega_0 |t|}$ which again is satisfied by Assumption \ref{assumeA}.

\smallskip 

\subsubsection{\bf Fr\"ohlich Polaron}\label{sec-Polaron} As discussed earlier, Fr\"ohlich Polaron model is defined  by choosing
\begin{equation}\label{Polaron}
\omega(k) =1 \qquad {and }\,\,\, \widehat\rho(k)= \frac 1 {|k|} \notin L^2(\R^d)
\end{equation} 
which leads to $H(t,x)=  \frac{\e^{- |t|}}{|x|}$. This case is not covered by Assumption \ref{assumeB}: note that although the Coulomb singularity of $x\mapsto \e^{-|t|}/|x|=H(t,x)$ in $d=3$ is covered by Assumption \ref{assumeB}, the latter requires a compactly supported function in the time variable (which does not cover the exponential decay in  $t\mapsto \e^{-|t|}/|x|=H(t,x)$). To handle this case, a completely different approach was developed in \cite{MV18} (see also \cite{MV18-II}) 
and an explicit description of the Gibbs measure in the infinite volume limit $T\to\infty$ was obtained,
together with a central limit theorem for the distribution of rescaled increments (both under the finite and infinite volume limit, as in \eqref{eq-thm2})
with an explicit formula for the variance $\sigma^2(\alpha) \in (0,1)$. In this approach, the identity $|x|^{-1}=\sqrt{2/\pi} \int_0^\infty \d u\exp\{-u^2 |x|^2/2\}$ was used to also show upper bound on the the variance $\sigma^2(\alpha) < 1$, implying the lower bound $m_{\mathrm{eff}}(\alpha)>1$ on the effective mass (see \eqref{massvariance}). 
Combining the above Gaussian representation with our main result, such an upper bound on the variance can also be obtained for the particular Coulomb interactions appearing in Assumption \ref{assumeB}.

\subsubsection{\bf Polymer models} Finally, the Dirac interaction considered in Assumption \ref{assumeB} covers prototypes of polymers measures, where self-intersections are rewarded (or penalized) by the energy ${\int_0^T\int_0^T}_{|s-t|\leq\eps}\delta(\omega(s)-\omega(t)) \d s \d t$ (see \cite{V69,R83,R85}).

\subsection{\bf Homogenization of the stochastic heat equation}

We now turn to the application to stochastic heat equation (SHE) with multiplicative noise. 
In the present context, 
we fix a spatial dimension $d\geq 3$ and denote by $\dot B$  a Gaussian space-time white noise defined on a complete probability space $(\mathcal X,\mathcal F,\mathbf P)$. In other words, if $\mathcal S(\R_+\times \R^d)$ denotes the space of Schwartz functions (i.e. smooth functions with derivatives having rapid decay at infinity) on $\R_+\times \R^d$, then for 
$\varphi\in \mathcal S(\R_+\times \R^d)$, 
$\dot B(\varphi)$ is a Gaussian random variable 
with mean $0$ and covariance function 
$$
\mathbf E^{\mathbf P}[ \dot B(\varphi_1)\,\, \dot B(\varphi_2)]= \int_0^\infty \int_{\R^d} \varphi_1(t,x) \varphi_2(t,x) \d x \d t, \qquad \varphi_1,\varphi_2\in \mathcal S(\R_+\times \R^d).
$$
 Throughout the rest of the article,
$\mathbf E$ will denote expectation w.r.t. the law $\mathbf P$ of the white noise $\dot B$.

In order to define  the noise pointwise in space and time, we fix any two non-negative, even, smooth and compactly supported functions $\psi:[0,\infty)\to \R_+$ and $\phi:\R^d\to \R_+$ which are normalized to have total mass $\int_0^\infty \psi(t) \,\d t=\int_{\R^d}\phi(x)\,\d x =1$, and set
$$
\dot B(t,x)= \int_{\R^d}\int_0^\infty \psi(t-s) \phi(x-y) \dot B(s,y) \d s \d y
$$
 to be the space-time mollified Gaussian noise. Fix any continuous and bounded function $u_0$ as initial condition and let $u(t,x)$ solve the multiplicative noise equation 
$$
\partial_t u_1=\frac 12 \Delta u_1 + \beta \, \dot B_1(t,x) u_1, \qquad u(0,x)=u_0(x).
$$
Its diffusively rescaled counterpart is defined as
\begin{equation}\label{hat-u}
\hat u_\eps(t,x)\stackrel{\mathrm{\ssup{def}}}{=}u_1(\eps^{-2}t,\eps^{-1}x)\qquad \hat u_\eps(0,x)=u_0(x).
\end{equation}
 Here is our next main result. 
\begin{theorem}[Annealed CLT and homogenization for the stochastic heat equation]\label{thm1}
Fix $d\geq 3$, $\beta>0$, $t>0$. Then there exist constants $\theta_0,\theta_1$ (depending on $\psi,\phi, \beta,d$) such that for any $x\in \R^d$, $\beta>0$ and $t>0$ 
\begin{equation}\label{homogenization}
\begin{aligned}
 \lim_{\eps\to 0} \e^{-\frac t{\eps^2}{\theta_0}- \theta_1} \,\, \mathbf E[\hat u_\eps(t,x)]
&= \overline u(t,x) 
\end{aligned}
\end{equation}
where $\overline u$ solves the homogenized diffusion equation
\begin{equation}\label{homogenized}
\partial_t \overline u= \frac 12 \mathrm{div}\big(\mathrm{a_\beta} \nabla \overline u\big)
\end{equation}
with diffusion coefficient $\mathrm a_\beta=\sigma(\beta)\mathrm{I}_{d\times d}$. 
\end{theorem}
We  remark that following standard customs the mollifiers $\psi$ and $\phi$ were chosen to be smooth and compactly supported. In this particular 
case the proof of Theorem \ref{thm1} only partially use Theorem \ref{thm2} when the time correlation function $\rho=\psi\star\psi$  has compact support and the spatial interaction $V=\phi\star \phi$ is bounded. Leveraging the full strength of Theorem \ref{thm2}, we could have as well worked with a more general class of mollifiers $\psi$ and $\phi$.  

We also remark that while  \eqref{homogenization} provides homogenization of the averages of $\hat u_\eps(t,x)$, it holds pointwise in $x\in \R^d$ and for any $\beta>0$. When $\beta>0$ is allowed to be small and spatial averages of  $\hat u_\eps(t,x)$ (i.e. integrals $\int f(x) \hat u_\eps(t,x) \d x$ for any smooth test function $f$) are considered, \eqref{homogenization} also implies a similar convergence  in probability. In fact, since $\int_{\R^d} \d x \,  \hat u_{\eps,t}(x) f(x)= \int_{\R^d} \d x \, [\hat u_{\eps,t}(x)-\mathbf E(\hat u_{\eps,t})] f(x)+ \int \d x \, \mathbf E[\hat u_\eps(t,x)] f(x)$,  after rescaling, \eqref{homogenization} readily provides the convergence of the second term to the homogenized limit. On the other hand, the first term can be handled by an $L^2(\mathbf P)$ computation that leads to studying exponential moments of functionals of two Brownian paths in the transient dimensions $d\geq 3$ and the existence of exponential moments requires the disorder strength $\beta>0$ to be small. 
Soon after our work was completed, circulated and posted, \cite{GRZ17} followed a strategy similar to the proof of Theorem \ref{thm2} (for the spatial case when $\rho$ and $V$ are both compactly supported, smooth and bounded) and proved the aforementioned convergence in probability for the spatial averages and showed that $\eps^{1-\frac d 2} \int_{\R^d} \d x \, f(x) [\hat u_\eps(t,x)- \mathbf E(\hat u_\eps(t,x))]$ converges in distribution to (spatially averaged) solution of the additive-noise heat equation (see also \cite{MU17} for a similar result with a different approach).

\subsection{\bf Earlier approaches and the central idea of the present proof}\label{comparison} 
Let us start with a brief overview of (probabilistic) methods that have been employed earlier for studying Gibbs measures on path spaces. 

\noindent {\bf Earlier approaches: } As remarked earlier, Gibbs measures with weights of the form \eqref{Htx} have been considered in the earlier works \cite{BS05,G06}. Requiring  that 
\begin{equation}\label{requirement}
\begin{aligned}
&\int_{\R^d} \d k \, |\widehat\rho(k)|^2 [\omega(k)^{-1}+ \omega(k)^{-2}+ \omega(k)^{-3}] < \infty, 
\\
&\int_{\R^d} \d k \,  |k|^2 \, |\widehat\rho(k)|^2 [\omega(k)^{-2}+ \omega(k)^{-4}] < \infty,
\end{aligned}
\end{equation}
the validity of a functional central limit theorem was shown in \cite{BS05,G06}.\footnote{In terms of the interaction \eqref{H-scr},  the CLT was shown in \cite[Theorem 17]{G06} under the assumption 
\begin{equation}\label{requirement2}
\begin{aligned}
&\sup_{x\in\R^d}|\nabla_x\nabla_x\, H(t,x) |\leq \frac{C}{(1+t)^\alpha} \,\,\mbox{for some}\,\,\alpha>3,\\
&\qquad\qquad\qquad\mbox{\underline{and}, if one of the following conditions holds:}\\
&\sup_{x\in\R^d} |H(t,x)| \leq\frac{ C^\prime}{(1+t)^{3+\eps}} \quad\mbox{ or, small coupling parameter }\alpha>0 \,\,\mbox{ and } \alpha>4,
\end{aligned}
\end{equation} 
for some $C,C^\prime\in (0,\infty)$ and $\eps>0$. In \cite[p.1578 and Eq. (2.10)]{DS19} the relation (resp. equivalence) of the two requirements \eqref{requirement} and \eqref{requirement2} is discussed.} In \cite{BS05}, the authors relied on using an auxiliary Gaussian field which allows to linearize the Ornstein-Uhlenbeck type interaction enabling one to view the ambient process  as the projection of a Markov process on a larger state space, or a Brownian motion moving in a dynamic random environment.  
The functional central limit theorem then follows from invoking Kipnis-Varadhan theory. In \cite{G06} the classical Dobrushin method \cite{D68},\cite{D70} has been employed 
 by cutting the path into several pieces and considering each piece as a spin and increments of Brownian paths as the basic variables. Then techniques from one-dimensional spin systems could be applied  to extract mixing properties of the limiting Gibbs measure which implies the desired central limit theorem. 
 
 For treating the Fr\"ohlich polaron corresponding to $H(t,x)=\e^{-|t|}/{|x|}$ (which does not satisfy \eqref{requirement}) 
 a new approach was developed in \cite{MV18} by expanding the exponential weight in the interaction 
 in a power series and writing the path measure as an explicit  mixture of Gaussian measures, the mixture being taken over a tilted Poisson point process with intensity measure 
 $\alpha \e^{-(t-s)} \1_{-T\leq s < t\leq T} \d s \d t$ and taking values in the space of intervals $\{[s,t]\}_{-T\leq s < t \leq T}$. By developing renewal theory for this ``mixture representation", an explicit formula (again as a mixture of Gaussian measures) for the infinite volume Polaron measure was obtained and a resulting CLT was shown under both the finite and infinite volume Polaron measure with a variance $\sigma^2(\alpha)\in (0,1)$. 
 
 Very recently, the approach of \cite{MV18} was extended in \cite{BP21} where the result on the 
 CLT requires (apart from the assumptions needed for existence of the infinite volume limit)
an additional hypothesis about quasi-concavity of $x\mapsto H(\cdot,x)$ to apply Gaussian correlation inequalities. These assumptions are satisfied by the Fr\"ohlich Polaron 
defined by $H(t,x)=\e^{-|t|}/{|x|}$ in $d=3$ (unlike Theorem \ref{thm2}), but are more restrictive than Assumption \ref{assumeA} (Theorem \ref{thm2} under Assumption \ref{assumeA} does not need any concavity in the spatial component $x\mapsto H(\cdot,x)$). The method there follows the approach developed in \cite{MV18} for the Fr\"ohlich Polaron, but unlike \cite{MV18} or our present approach, the proof of \cite{BP21} additionally needs known facts from quantum mechanics about the existence of ground states of the Fr\"ohlich Polaron at zero total momentum, spectral gaps etc.     

\medskip

\noindent{\bf Main idea of the current method:} In the present context we do not impose assumptions of the form \eqref{requirement} and develop an alternative
 approach based on a ``Markovianization technique"  which can be briefly summarized as follows. Let us first consider the case when the time correlations $\rho$ has compact support (cf. Assumption \ref{assumeB}). Then we can split the time interval $[0,T]$ into $O(T)$ many subintervals $I_j$ of
constant length so that in the double integral in $\mathscr H_{T}$ only interactions between ``neighboring intervals" $I_j$ and $I_{j+1}$ survive, while 
the diagonal interactions (i.e., interactions coming from the same interval $I_j$) are absorbed in the product measure $\P$  corresponding to Brownian increments on disjoint intervals. 
Then we are led to the study of a ``tilted" Markov chain taking values on the space of increments. 
Since our interactions also carry spatial singularities (like Coulomb or Dirac type) an important technical step at this point involves deriving  fine  regularity properties (on an exponential scale) of singular functionals of Brownian paths (cf. Lemma \ref{lemma:Lambda}). Leveraging these regularity estimates allows us to invoke ``uniform estimates" for unbounded interactions and consequently, it is justified that the aforementioned transformed Markov chain satisfies spectral gap estimates forcing its
fast convergence to equilibrium. Now when the time correlation function $\rho(\cdot)$ decays slowly (cf Assumption \ref{assumeA}) we can split the interval $[0,T]$ into subintervals $I_{j,L}$ of length $L=L(T)\gg 1$ such that $T/L\to \infty$ and the underlying measure $\widehat{\mathbb Q}_{\alpha,T}$ is well-approximated by a similar object that only captures interactions between neighboring intervals $I_{j,L}$ and $I_{j+1,L}$ while the diagonal interactions are again absorbed in the product measure $\P$ over disjoint intervals. This approximation is provided by good relative entropy estimates. We can now apply the above spectral gap technique from the first step to derive convergence of the tilted Markov chain to equilibrium {\it{uniformly in $L$}}, proving Theorem \ref{thm2}.

We hope that the present approach outlined above is conceptually simple and it covers a broad class of interactions satisfying Assumption \ref{assumeA} or Assumption \ref{assumeB}.  As this approach is different from the techniques relying on \cite{D68,D70}, 
it is able to handle spatial singularities, requires slower (and conceivably, sharp) decay of correlation in the time interaction, and the resulting CLT is shown to be valid for any 
coupling parameter $\alpha>0$.  
While the present set up does not cover interactions that are both singular in space and long-range dependent in time, we hope that such cases might yield to an extension of the present method, see Remark \ref{rmk-Coulomb}.

\medskip

\noindent{\bf Organization of the article.} The rest of the article is organized as follows. In Section \ref{sec-annealed-clt} we  construct a tilted Markov chain starting from interactions $H(t,x)$ which in the $t$-variable vanish outside a compact set, while possibly carrying singularities in the space variable and study the spectral properties of this Markov chain. Building on the results of Section \ref{sec-annealed-clt} and assuming that the singular interactions in Assumption \ref{assumeB}
can be handled in a uniform manner, the proofs of Theorem \ref{thm2} (both under Assumption \ref{assumeA} or Assumption \ref{assumeB}) and Theorem \ref{thm1} are provided in Section \ref{sec-thm2-thm1}. 
Finally, Section \ref{sec:lemma:Lambda} is then devoted to studying regularity properties of exponential functionals of Brownian paths with singular potentials that allow us to handle the singular interactions covered by Assumption \ref{assumeB}
 in a uniform manner.

\section{A tilted Markov chain on Brownian increments and its spectral properties}\label{sec-annealed-clt}


For conceptual clarity, throughout  the rest of this section we will assume that the interaction $H(t,x)$ satisfies Assumption \ref{assumeB} (i.e., it is of the form $H(t,x)=\rho(t)V(x)$, where the time correlation $\rho$  has compact support, while $V$ satisfies the one of the assumptions listed in \eqref{eq:assumeB}). The proof of Theorem \ref{thm2} under  Assumption \ref{assumeA} or Assumption \ref{assumeB} will be carried out in Section \ref{sec-thm2-thm1}
building on the approach we develop in the present section. 

\subsection{Markovianization of the interaction and a Hilbert-Schmidt operator}\label{sec-Markovian}
Without loss of generality, we will assume that $\rho(t)=0$ for $|t|>\frac 1 2$ and $T\in \N$ is an integer. Then $[0,T]=\cup_{j=1}^T I_{j-1}$ with $I_{j-1}=[j-1,j]$ and we will write for any $\omega\in \Omega$, 
\begin{equation}\label{eq:xi}
\xi_j=\xi_j(\omega)=\big\{\omega(t)-\omega(s)\colon s<t,\, s,t\in I_j\big\}.
\end{equation}
Recall that $\P$ is the law of Brownian increments defined  on $\sigma$-algebra generated by Brownian increments $\{\omega(t)-\omega(s)\}$ over disjoint intervals, and 
$Z_T$ is the total mass of the exponential weight which defines the measure $\widehat{\mathbb Q}_T$ over the time interval $[0,T]$(recall \eqref{eq:Q}). 
We can now rewrite 
\begin{align}
\widehat{\mathbb Q}_T\bigg(\prod_{j=1}^T \d \xi_{j-1}\bigg)&= \frac 1 {Z_T} \exp\bigg\{\sum_{j=1}^{T-1}\bigg(\int_{I_{j-1}}\int_{I_{j-1}} \d t \d s \, H\big(t-s, \omega(t)-\omega(s)\big)\bigg) \nonumber
\\&\qquad\qquad\qquad+ 
\bigg(\int_{I_{j-1}}\int_{I_{j}} \d t \d s \, H\big(t-s, \omega(t)-\omega(s)\big)\bigg)\bigg\} \P\bigg(\prod_{j=1}^T \d \xi_{j-1}\bigg)\label{eq:Q1.5}\\
&=\frac 1 {\mathscr Z_T} \exp\bigg\{\sum_{j=1}^{T-1} k(\xi_{j-1},\xi_j)\bigg\}\prod_{j=1}^T \pi(\d\xi_{j-1})\quad\mbox{with} \quad\mathscr Z_T= \frac {Z_T}{Z_1^T}, \label{eq:Q2}
\end{align}
where ``diagonal part of the interaction" is defined by 
\begin{equation}\label{eq:pi}
\pi\big(\d\xi_{j-1}\big)=\frac 1 {Z_1} \exp\bigg[\alpha\int_{I_{j-1}}\int_{I_{j-1}}\,\,\d t\, \d s\,\,H(t-s, \omega(t)-\omega(s))\bigg]\,\P(\d\xi_{j-1})
\end{equation}
as the weighted probability measure defined on increments over any single sub-interval $I_j$ (with $Z_1$ being the normalizing constant) and 
\begin{equation}\label{eq:k}
\begin{aligned}
k(\xi_{j-1},\xi_j) & = 2\alpha\int_{I_{j-1}} \d s \int_{I_{j}}\d t \,H \big(t-s, \,(\omega(t)-\omega(j-1))+(\omega(j-1)-\omega(s))\,\big)\\
&=2\alpha\int_{I_{j-1}} \d s \int_{I_{j}}\d t \,H \big(t-s, \omega(t)-\omega(s)\,\big)  
\end{aligned}
\end{equation}
is the kernel representing the ``off-diagonal part" of the interaction. We remark  that because of rotational symmetry of $x\mapsto H(t,x)$, we have $k(\xi,\xi^\prime)=k(-\xi,-\xi^\prime)$. 

Let $\mathcal E$ denote the space of increments $\big(\omega(t)-\omega(s)\big)_{0\leq s<t \leq 1}$ over the intervals of unit length and  
$$
L^2(\pi)=\Big\{u:\mathcal E\to \R\colon\,\, \int_\mathcal E u(\xi)^2\,\pi(\d\xi)<\infty\Big\}.
$$
On this space, we can define the integral operator 
\begin{equation}\label{def-operator-L}
(\mathscr L u)(\xi)= \int_{\mathcal E} \e^{k(\xi,\xi^\prime)}\, u(\xi^\prime)\,\pi(\d\xi^\prime)
\end{equation}
We also note that the kernel $\exp[k(\cdot,\cdot)]$ is not symmetric and hence, $\mathscr L$ is not necessarily a self-adjoint operator. We will now deduce some useful properties of $\mathscr L$. 
\begin{prop}\label{lemma-HS}
Under Assumption \ref{assumeB}, $\mathscr L$ is a Hilbert-Schmidt operator\footnote{In fact $\mathscr L \mathscr L^{\star}$ and $\mathscr L^{\star} \mathscr L$ are positive definite and are trace class operators with
$\mathrm{Tr} (\mathscr L\mathscr L^\star)=\mathrm{Tr} (\mathscr L^\star \mathscr L)= \|\mathscr L\|_{\mathrm{HS}}^2$.}
with 
$$
\|\mathscr L\|^2_{\mathrm{HS}} ={\int\int}_{\mathcal E\times\mathcal E}\,\, \e^{2k(\xi,\xi^\prime)}\,\,\pi(\d\xi)\pi(\d\xi^\prime)<\infty. 
$$
\end{prop}

If $V$  is a bounded function in Assumption \ref{assumeB}, the above fact follows immediately. In order to handle the spatially singular interactions in Assumption \ref{assumeB}, 
the proof of Proposition \ref{lemma-HS} will be carried out using Lemma \ref{lemma:Lambda} stated below for which we will need to introduce some further notation. 
Let $\P_x$ be the law of a $d$-dimensional Brownian path $W=(W_s)_s$ starting at $x\in \R^d$, while $\E_x$ stands for the corresponding expectation. 
For any  measurable function $V:\R^d\to \R$, we set 
$$
\begin{aligned}
&\Lambda_t(x)=\frac 1 t \int_0^t \d s \, V(W_s-x)=\int_{\R^d} V(x-y) L_t(\d y)= (V\star L_t)(x), \quad\mbox{where } \\
& L_t(A)=\frac 1 t \int_0^t \d s \, \1\{W_s\in A\} \,\,\forall A\subset \R^d.
\end{aligned}
$$
Writing $\langle \mu, f\rangle=\int_{\R^d} f(x) \mu(\d x)$ for any measure $\mu$ and any function $f$,  we then have 
  \begin{equation}\label{L1Linfty}
\int_0^1\int_0^1 V(W_t-W_s) \d s \, \d t =\langle \Lambda_1,L_1\rangle \leq \|\Lambda_1\|_\infty. 
\end{equation}
For $V(x)=\delta_0(x)$, in $d=1$ the above estimate is understood in the sense $\int_0^1\int_0^1 V(W_t-W_s) \d s \, \d t \leq \sup_{x\in \R} \int_0^1 \delta_0(W_t-x) \d t$.

The following technical fact will imply Proposition \ref{lemma-HS} as well as Lemma \ref{lemma-supinf} stated below. 
\begin{lemma}\label{lemma:Lambda}
Let $V:\R^d\to \R$ be any function satisfying \eqref{eq:assumeB} in Assumption \ref{assumeB}. Then for any $C>0$, 
\begin{equation}\label{check1}
\sup_{x\in \R^d} \E_x\big[\e^{C\|\Lambda_1\|_\infty}\big]<\infty.
\end{equation}
\end{lemma}
The proof of Lemma \ref{lemma:Lambda} is technical. In order to not ebb the flow of arguments, its proof  is deferred to until Section \ref{sec:lemma:Lambda}. 
Assuming the above fact, let us first conclude 

\smallskip 

\noindent{\bf Proof of Proposition \ref{lemma-HS} (Assuming Lemma \ref{lemma:Lambda}).} 
First, note that
\begin{equation}\label{eq1:lemma-HS}
\begin{aligned}
&{\int\int}_{\mathcal E\times\mathcal E}\,\, \e^{2k(\xi,\xi^\prime)}\, \pi(\d\xi)\pi(\d\xi^\prime)
\\
&=\frac 1{Z_1^2} \,\,{\int\int}_{\mathcal E\times\mathcal E}\,\,\exp\bigg\{4\alpha\int_0^1\d s\int_1^{2} \d t H(t-s,\omega(t)-\omega(s))\bigg\} \\
&\qquad\qquad\qquad\exp\bigg\{\alpha\sum_{j=1}^2\int_{I_{j-1}}\d s\int_{I_{j-1}} \d t{H(t-s, \omega(t)-\omega(s))}\bigg\}\,\,\P(\d\xi_0)\,\P(\d\xi_1)
\end{aligned}
\end{equation}
Let $H(t,x)= \rho(t) V(x)$ as in Assumption \ref{assumeB}. If $V$ is bounded, then we can simply estimate 
$\sup_{t,x} |H(t,x)| \leq \|\rho\|_\infty \|V\|_\infty$ which immediately implies Proposition \ref{lemma-HS}. 
Now assume that $V(x)=1/|x|^p$ for $p\in (0,2/(d-2))$ in $d\geq 3$ or $V(x)=\delta_0(x)$ in $d=1$. Then using $H(t,x) \leq \|\rho\|_\infty V(x)$ and non-negativity of $V$, 
it follows from \eqref{eq1:lemma-HS} that 
$$
\begin{aligned}
&{\int\int}_{\mathcal E\times\mathcal E}\,\, \e^{2k(\xi,\xi^\prime)}\, \pi(\d\xi)\pi(\d\xi^\prime) \\
&\leq 
\frac 1{Z_1^2} \,\,\E\bigg[\exp\bigg\{4\alpha\|\rho\|_\infty\int_0^{2}\int_0^{2}{\d t\,\d s}{V(\omega_{t}-\omega_s)}\bigg\}\bigg] <\infty.
\end{aligned}
$$
The finiteness of the last term above now follows from the estimate \eqref{L1Linfty} and Lemma \ref{lemma:Lambda}.\qed  

\smallskip

We will also have several occasions to use the following result.

\begin{lemma}\label{lemma-supinf}
Under Assumption \ref{assumeB}, there exist constants $C_1, C_2\in(0,\infty)$, such that 
\begin{equation}\label{eq-supinf}
\sup_{\xi\in\mathcal E}\int_{\mathcal E} \, \e^{k(\xi,\xi^\prime)} \, \, \pi(\d \xi^\prime) \leq C_1 \qquad\mbox{and } \,\,\,\,   {\inf_{\xi\in\mathcal E}\int_{\mathcal E} \, \e^{k(\xi,\xi^\prime)} \, \, \pi(\d \xi^\prime)}\geq C_2.
\end{equation}
\end{lemma}
\begin{proof}
Let us first prove the upper bound. Recall the definition of the probability measure $\pi$ and that of the kernel $k(\cdot,\cdot)$ from \eqref{eq:pi} and \eqref{eq:k}, respectively.  It follows that the left hand side is bounded above by 
$$
\sup_{\xi}\, \frac 1 {Z_1} \E\bigg[\exp\bigg(2\alpha \int_0^1\int_1^2 V(\omega(s)-\omega(t)) \d s \d t + \alpha \int_1^2\int_1^2 \d s \d t V(\omega(s)-\omega(t))\bigg)\bigg],
$$
with the supremum being taken over all $\xi=\xi(\omega)= \{\omega(t)-\omega(s): s,t\in[0,1],s<t\}$. Now, the estimate \eqref{L1Linfty} and Lemma \ref{lemma:Lambda} again dictate that the display above is bounded above by a constant $C_1\in (0,\infty)$, proving the desired upper bound. To prove the required lower bound, recall that under Assumption \ref{assumeB}, we have $H(t,x)=\rho(t) V(x)$ where $V$ is either bounded or it is non-negative, while $\rho$ is non-negative. It follows that there exists a constant $C^\prime \in [0,\infty)$ such that $H(t,x) \geq - C$, and hence for some constant $C_2\in (0,\infty)$ 
\begin{equation}\label{lbkernel}
\inf_{\xi,\xi^\prime} \e^{k(\xi,\xi^\prime)} \geq C_2 
\end{equation}
which provides the desired uniform lower bound.

\end{proof}

\subsection{The tilted Markov chain, spectral properties and the Krein-Rutman argument}

Lemma \ref{lemma-HS} and Lemma \ref{lemma-supinf} will now imply the following important ingredient regarding the operator $\mathscr L$.
\begin{lemma}\label{lemma-Perron-Frobenius}
There exists $\lambda_0>0$  and a unique strictly positive function $\psi_0: \mathcal E\to\R$ such that 
\begin{itemize}
\item \begin{equation}\label{eq-eigenfunction}
(\mathscr L\psi_0)(\xi)=\int_{\mathcal E} \,\e^{k(\xi,\xi^\prime)}\,\psi_0(\xi^\prime)\,\pi(\d\xi^\prime)=\lambda_0\psi_0(\xi), \qquad \int_{\mathcal E} \psi_0(\xi) \pi(\d\xi)=1.
\end{equation}
\item Furthermore, 
\begin{equation}\label{bound-eigenfunction}
\sup_\xi\psi_0(\xi)<\infty, \qquad\mbox{and }\quad \inf_\xi \psi_0(\xi)>0.
\end{equation}
\end{itemize}
\end{lemma}
\begin{proof}
 Lemma \ref{lemma-HS} implies in particular that $\mathscr L$ is a compact operator in $L^2(\pi)$. Furthermore, the uniform lower bound \eqref{lbkernel} 
implies that for any $u\in L^2_+(\pi)$, $\int \e^{k(\xi,\xi^\prime)} u(\xi^\prime) \pi(\d\xi^\prime) \geq C_2 \int u(\xi^\prime) \pi(\d\xi^\prime)$ and 
 $\mathscr L$ also maps the cone $L^2_+(\pi)$ of positive functions
into itself. Therefore, by Krein-Rutman theorem \cite{K74,JJ14}, 
there exists a simple eigenvalue $\lambda_0>0$ and an associated eigenfunction $\psi_0\in L^2_+(\pi)$ of $\mathscr L$ so that \eqref{eq-eigenfunction} holds and 
moreover, $\psi_0(\xi)\ne 0$ for
$\pi$-almost every $\xi$. Combining the last three assertions we have 
\begin{equation}\label{eq:positive}
\inf_\xi \psi_0(\xi)>0.
\end{equation}
On the other hand, the uniform upper bound part in Lemma \ref{lemma-supinf} also implies that the eigenfunction $\psi_0$ is bounded above. Indeed, by \eqref{eq-eigenfunction} and Cauchy-Schwarz inequality, for all $\omega\in\Omega$, 
\begin{equation}\label{ub-eigenfunction}
\begin{aligned}
\psi_0(\xi)= \lambda_0^{-1}\,\,\int_{\mathcal E} \, \e^{k(\xi,\xi^\prime)} \, \psi_0(\xi^\prime)\, \pi(\d\xi^\prime) 
&\leq \lambda_0^{-1}\,\|\psi_0\|_{L^2(\pi)}\,\,\,\bigg[\int_{\mathcal E} \, \e^{2k(\xi,\xi^\prime)}  \pi(\d\xi^\prime)\bigg]^{1/2} \\
&\leq \lambda_0^{-1}\,\|\psi_0\|_{L^2(\pi)}\,\,\,\sup_{\xi\in\mathcal E}\bigg[\int_{\mathcal E} \, \e^{2k(\xi,\xi^\prime)}  \pi(\d\xi^\prime)\bigg]^{1/2} \\
&\leq C_1\lambda_0^{-1}\,\|\psi_0\|_{L^2(\pi)}.
\end{aligned}
\end{equation}
\end{proof}
We will need another elementary fact. 
\begin{lemma}\label{lemma-submultiplicative} 
Let $0\leq f(\cdot)\leq 1$ be a measurable function on a measurable space $(X,\mathcal F)$ such that $|f(\theta)- f(\theta^\prime)| \leq C<\infty$. Let $\mu_1$ and $\mu_2$ be two probability measures on $X$ such that the signed measure $\eta:= \mu_1- \mu_2$ satisfies $\sup_{A\subset X} |\eta(A)| \leq \delta$ for some $\delta>0$. Then $|\int_{X} f \d\eta| \leq C\delta$. 
\end{lemma}
\begin{proof}
The assumption on $f$ implies that $\inf_a \sup_\theta |f(\theta)- a| \leq \sup_\theta | f(\theta)- \frac 12(\sup f+ \inf f)| \leq \frac C2$. On the other hand, let $X= P\cup N$ be the Hahn-Jordan decomposition of the signed measure $\eta= \eta^+- \eta^-$ with $\eta^+(N)=\eta^-(P)=0$. Since $\mu_1$ and $\mu_2$ are two probability measures on $X$, 
it follows that $\eta(X)= \mu_1(X)- \mu_2(X)=0$. Since the decomposition property implies $\eta^+(N)=\eta^-(P)=0$, it follows that $\eta^+(P)=\eta^-(N)$. Hence,
 $$
 |\eta|(X):= \eta^+(X)+ \eta^-(X) = \eta^+(P)+\eta^-(N)=2 \eta^+(P)= 2 |\eta(P)| \leq 2 \sup_{A\subset X} |\eta(A)| \leq 2\delta.
 $$
  Finally, since for any constant $a\in \R$, $\int_{X} a \d\eta=0$, we have 
$$
\bigg|\int_{X} f \d\eta\bigg| =\inf_a \bigg | \int_{X} (f-a) \d\eta\bigg | \leq \big(\inf_a \sup_\theta |f(\theta)- a|\big) \big(|\eta|(X)\big) \leq \big(\frac C2\big)(2\delta)= C\delta
$$
\end{proof} 

\begin{lemma}\label{lemma-tilted-MC}
Given the transition probability kernel $\widetilde \pi$ (defined below in \eqref{eq:pi:tilde}) on $\mathcal E$, there exists a unique $\widetilde\pi$-invariant probability measure 
$\mu(\d\xi)=\phi(\xi)\pi(\d\xi)$ on $\mathcal E$ (i.e. $\mu(\cdot)= \int \widetilde\pi(\xi, \cdot) \widetilde \mu(\d\xi)$). Consequently, 
there exists a unique stationary ergodic Markov chain $\mathbb Q^{\ssup{\widetilde\pi,\mu}}$ with starting distribution $\mu$
and transition probabilities constructed from $\widetilde \pi$.
\end{lemma}
\begin{proof}
Let us to define 
\begin{equation}\label{eq:pi:tilde}
\widetilde\pi(\xi,\xi^\prime)= \frac{\e^{k(\xi,\xi^\prime)} \,\,\psi_0(\xi^\prime) }{\lambda_0 \, \psi_0(\xi)}.
\end{equation}
Recall  \eqref{eq-eigenfunction} which implies that  
$\int \widetilde \pi(\xi,\xi^\prime) \pi(\d\xi^\prime)=1$ so that $\widetilde\pi(\cdot,\cdot)$ is a transition probability kernel w.r.t. the reference probability measure $\pi$. 
We will write $\widetilde\pi(\xi, \d\xi^\prime)= \widetilde\pi(\xi,\xi^\prime)\pi(\d\xi^\prime)$ and its corresponding $n$-step transition probability kernel as
\begin{equation}\label{eq:pi:tilde:n}
\widetilde\pi^{\ssup n}(\xi,\cdot)= \int \widetilde \pi(\xi,\d\theta) \, \widetilde\pi^{\ssup{n-1}}(\theta,\cdot) 
\end{equation}
Now the uniform lower bound on the kernel from \eqref{lbkernel} as well as uniform lower and upper bounds on the eigenfunction $\psi_0$ shown in \eqref{bound-eigenfunction} provide a {\it uniform lower bound} on the {\it tilted kernel}:
\begin{equation}\label{lb1}
\inf_{\xi,\xi^\prime} \widetilde\pi(\xi,\xi^\prime)= \inf_{\xi,\xi^\prime} \,\bigg(\frac{ \e^{k(\xi,\xi^\prime)} \psi_0(\xi^\prime)}{\lambda_0 \psi_0(\xi)}\bigg) \geq \delta \qquad\mbox{where}\,\,\delta:=\bigg(\frac {C_2}{\lambda_0}\bigg)\bigg(\frac{ \inf_\xi \psi_0(\xi)}{\sup_\xi \psi_0(\xi)}\bigg) >0.
\end{equation} 
The above lower bound implies that the transition probabilities $\widetilde\pi$ w.r.t. the reference probability measure $\pi$ 
satisfy the classical {\it D\"oblin condition} (see the seminal work \cite{D38}). This property has several consequences. In particular, if we set 
$$
\alpha_n:= \sup_{\xi,\xi^\prime} \sup_{A\subset \mathcal E} \big| \widetilde \pi^{\ssup n}(\xi, A)- \widetilde \pi^{\ssup n}(\xi^\prime, A)\big| 
$$
then, by \eqref{lb1} 
\begin{equation}\label{bound-alpha1}
\alpha_1 \leq (1-\delta).
\end{equation}  

By Chapman-Kolmogorov equation, 
$$
\widetilde\pi^{\ssup {n+m}}(\xi, A)- \widetilde \pi^{\ssup{n+m}}(\xi^\prime,A)=\int \widetilde\pi^{\ssup n}(\theta,A)[ \widetilde\pi^{\ssup m}(\xi,\d\theta) - \widetilde\pi^{\ssup m}(\xi^\prime,\d\theta)].
$$
 Now we want to use 
Lemma \ref{lemma-submultiplicative}  by choosing $f(\theta)= \widetilde\pi^{\ssup n}(\theta, \cdot)$, 
$\mu_1(\d\theta)= \widetilde\pi^{\ssup m}(\xi, \d\theta)$, $\mu_2(\d\theta)= \widetilde\pi^{\ssup m}(\xi^\prime, \d\theta)$. It follows that $\alpha_n$ is sub-multiplicative, 
i.e., $\alpha_{n+m} \leq \alpha_n \alpha_m$.  Hence, $\alpha_n \equiv 1$ for all $n$, unless $\alpha_k =:a< 1$ for some $k\in \N$. In the latter case, 
the above sub-multiplicative property dictates that $\alpha_n \leq \alpha_k ^{\small{[\frac nk]}} \leq C (a^{1/k})^n$ for some constant $C>0$. 
Then \eqref{bound-alpha1} implies that $\alpha_n \leq (1-\delta)^n\to 0$ as $n\to\infty$. Again by Kolmogorov-Chapman equation,  $\big|\widetilde \pi^{\ssup n}(\xi, A)- \widetilde\pi^{\ssup{n+m}}(\xi, A)\big| =\big| \int \big[\pi^{\ssup n}(\xi, A)- \widetilde \pi^{\ssup n}(\xi^\prime,A)\big] \, \widetilde\pi^{\ssup m}(\xi, \d\xi^\prime)\big| \leq \alpha_n \to 0$ as $n\to\infty$. Hence, there is a probability measure 
$$
\mu(\d\xi):=\phi(\xi) \pi(\d\xi) \qquad\mbox{with }\,\, \int \phi(\xi) \pi(\d \xi)=1
$$
with $\phi(\cdot)>0$ such that  
\begin{equation}\label{ergodic}
\sup_\xi \, \big\|\widetilde\pi^{\ssup n}(\xi, \cdot)-\widetilde \mu(\cdot)\|_{\mathrm{TV}} \leq C (1-\delta)^n \to 0 \qquad\mbox{as }\,\, n\to\infty\,\, \footnote{ The above convergence \eqref{ergodic} also holds uniformly over any initial condition $\psi\d\pi$ for $\int \psi \d\pi=1$ and $\psi\geq 0$.}\end{equation} 
where $\|\cdot||_{\mathrm{TV}}$ is the total variation norm (cf. \eqref{Tvdef}). It is also easily verified that $\mu$ is an invariant probability measure for $\widetilde\pi$, i.e., $\mu(\cdot)= \int \widetilde\pi(\xi,\cdot)\mu(\d\xi)$. In fact, if $\nu$ is another invariant probability measure, then by invariance $\nu(\cdot)= \int \widetilde\pi^{\ssup n}(\xi,\cdot)\nu(\d\xi)$ for every $n$, and passing to the limit $n\to\infty$ leads to the identity, $\nu(\cdot)=\lim_{n\to \infty} \int \widetilde\pi^{\ssup n}(\xi, \cdot) \nu(\d\xi)= \mu(\cdot)$ proving that the invariant probability measure $\mu$ is indeed unique. Given the $\widetilde\pi$-invariant probability measure $\mu$, 
there exists a unique stationary ergodic Markov chain whose law is denoted by $\mathbb Q^{\ssup{\widetilde\pi,\mu}}$. 
\end{proof}

Recall that for any two probability measures $\mu_1, \mu_2$  on any measurable space $(X, \mathcal F)$, their total variation distance on $\mathcal F$ is defined by 
\begin{equation}\label{Tvdef}
\|\mu_1-\mu_2\|_{\mathrm{TV}, \mathcal F}= \sup_{A\in \mathcal F} \big| \mu_1(A)- \mu_2(A)\big| = \frac 12 \sup_{\|f\|_\infty \leq 1} \big| \int f \d\mu_1- \int f \d\mu_2\big|
\end{equation}
with the supremum being taken over $\mathcal F$-measurable functions $f$ bounded above by $1$. 

Then the total variation distance between the original measure $\widehat{\mathbb Q}_n$ defined in \eqref{eq:Q2} and the stationary Markov chain 
$\mathbb Q^{\ssup{\widetilde\pi,\mu}}$ constructed in Lemma \ref{lemma-tilted-MC} is estimated as follows. 

\begin{lemma}\label{lemma2-TV-esti}
For $n_0<n$, let $\widehat{\mathbb Q}_{n_0,n}$ be the restriction of 
$\widehat{\mathbb Q}_n$ to the interval $[n_0,n]$, i.e., $\widehat{\mathbb Q}_{n_0,n}$ is the tilted measure \eqref{eq:Q2} defined on the $\sigma$-algebra $\mathcal F_{n_0,n}$ generated by Brownian increments in the time interval $[n_0,n]$. Then, 
\begin{equation}\label{eq:TV}
\lim_{n_0\to\infty} \, \sup_{n> n_0} \, \big\| \mathbb Q^{\ssup{\widetilde\pi,\mu}} \, - \, \widehat{\mathbb Q}_{n_0,n} \big\|_{\mathrm{TV}, \mathcal F_{n_0,n}} =0.
\end{equation}
\end{lemma}
\begin{proof}


\noindent{\bf Step 1:} First we determine the asymptotic behavior of the renormalized partition function $\mathscr Z_n=Z_n/Z_1^n$ defined in \eqref{eq:Q2}, see \eqref{eq:step1} below. 
Recall from \eqref{eq:pi:tilde} that we write $\widetilde\pi(\xi,\d\xi^\prime)= \widetilde\pi(\xi,\xi^\prime) \pi(\d\xi^\prime)$ for the tilted transition probabilities. Then 
$$
\prod_{j=1}^{n-1} \widetilde \pi(\xi_{j-1},\d\xi_j)= \frac 1 {\lambda_0^{n-1}} \frac {\psi_0(\xi_{n-1})}{\psi_0(\xi_0)} \, \e^{\sum_{j=1}^{n-1} k(\xi_{j-1}, \xi_j)} \, \prod_{j=1}^{n-1} \pi(\d\xi_j)
$$
and consequently, by definition of $\widehat{\mathbb Q}_n$ in \eqref{eq:Q2}
$$
\widehat{\mathbb Q}_n(\d\xi_0\dots\d\xi_{n-1})= \frac {\lambda_0^{n-1}}{\mathscr Z_n} \,\,\frac {1}{\psi_0(\xi_{n-1})} \,\,\bigg(\psi_0(\xi_0)\pi(\d\xi_0)\, \prod_{j=1}^{n-1} \widetilde \pi(\xi_{j-1},\d\xi_j)\bigg),
$$
or equivalently, 
$$
\exp\bigg[\log \bigg(\frac{\mathscr Z_n}{\lambda_0^{n-1}}\bigg)\bigg]\, \widehat{\mathbb Q}_n(\d\xi_0\dots\d\xi_{n-1})= \frac {1}{\psi_0(\xi_{n-1})} \,\,\bigg(\psi_0(\xi_0)\pi(\d\xi_0)\, \prod_{j=1}^{n-1} \widetilde \pi(\xi_{j-1},\d\xi_j)\bigg).
$$
Recall that $\int \psi_0(\xi)\pi(\d\xi)=1$. Integrating now both sides in the above display and invoking \eqref{ergodic} yields 
\begin{equation}\label{eq:step1}
\log\bigg(\frac{\mathscr Z_n}{\lambda_0^{n-1}}\bigg)= \log \bigg(\int\frac{\mu(\d\xi)}{\psi_0(\xi)}\bigg)+ o(1) \qquad\mbox{as } \, n\to\infty.
\end{equation} 
Recall that $\frac 1 {\psi_0}$ is a bounded function.

\smallskip 

\noindent{\bf Step 2:} For the restriction $\widehat{\mathbb Q}_{n_0,n}$ we also have similarly 
\begin{equation}\label{eq:step2}
\begin{aligned}
&\widehat{\mathbb Q}_{n_0,n}(\d\xi_{n_0}\dots\d\xi_{n-1})
= \frac {1}{\mathscr Z_n} \,\, \bigg\{\e^{\sum_{j=n_0+1}^{n-1} k(\xi_{j-1}, \xi_j)} \, \prod_{j=n_0+1}^{n-1} \pi(\d\xi_j)\bigg\} \\
&\qquad\qquad\times \bigg\{\bigg(\int (\psi_0(\xi_0)\pi(\d\xi_0)\int\pi(\d\xi_1)\e^{k(\xi_0,\xi_1)}\dots\int\pi(\d\xi_{n_0-1})\e^{k(\xi_{n_0-2},\xi_{n_0-1})}\bigg)\\
&\qquad\qquad\qquad\times \bigg(\e^{k(\xi_{n_0-1},\xi_{n_0})}\pi(\d\xi_{n_0})\bigg)\bigg\} \\
&= \frac {\lambda_0^{n-1}}{\mathscr Z_n} \bigg\{\prod_{j=n_0}^{n-2}\widetilde\pi(\xi_j,\d\xi_{j+1}) \frac{\psi_0(\xi_{n_0})}{\psi_0(\xi_{n-1})}\bigg\}\\
&\qquad\qquad\times\bigg\{\bigg(\int (\psi_0(\xi_0)\pi(\d\xi_0)\int \widetilde\pi(\xi_0,\d\xi_1)\int\widetilde\pi(\xi_1,\d\xi_2)\dots\int \widetilde\pi(\xi_{n_0-2},\d\xi_{n_0-1})\bigg)\\
&\qquad\qquad\qquad\times \frac{\widetilde\pi(\xi_{n_0-1},\d\xi_{n_0})}{\psi_0(\xi_{n_0})}\bigg\}\\
&=\frac {\lambda_0^{n-1}}{\mathscr Z_n} \bigg\{\prod_{j=n_0}^{n-2}\widetilde\pi(\xi_j,\d\xi_{j+1}) \frac{1}{\psi_0(\xi_{n-1})}\bigg\}\\
&\qquad\qquad\times\bigg\{\bigg(\int (\psi_0(\xi_0)\pi(\d\xi_0)\int \widetilde\pi(\xi_0,\d\xi_1)\int\widetilde\pi(\xi_1,\d\xi_2)\dots\int \widetilde\pi(\xi_{n_0-2},\d\xi_{n_0-1})\bigg)\\
&\qquad\qquad\qquad\times \widetilde\pi(\xi_{n_0-1},\d\xi_{n_0})\bigg\}\\
&=\bigg\{\prod_{j=n_0}^{n-2}\widetilde\pi(\xi_j,\d\xi_{j+1}) \,\bigg(\frac{\lambda_0^n}{\psi_0(\xi_{n-1}) \mathscr Z_n}\bigg)\bigg\} \bigg\{\widetilde\pi^{\ssup {n_0}}(\psi_0\d\pi, \d\xi_{n_0})\bigg\}.
\end{aligned}
\end{equation} 

\smallskip 

\noindent{\bf Step 3:} We can now conclude the proof of \eqref{eq:TV}. Recall that $\mathbb Q^{\ssup{\widetilde\pi,\mu}}$ is the law of the stationary Markov chain starting with the $\widetilde\pi$-invariant distribution $\d\mu=\phi\d\pi$.  In the last display above in \eqref{eq:step2} we first pass to the limit $n\to\infty$ and invoke \eqref{eq:step1} and \eqref{ergodic}, followed by letting $n_0\to\infty$ and invoking \eqref{ergodic} once more to complete the proof of \eqref{eq:TV}.

\end{proof}


The following general result will be useful in the present context. 

\begin{lemma}\label{DoeblinPoisson2}
Let $(X,\mathcal F, \mu)$ be a probability space equipped with a measurable and one-to-one map $S: X \to X$ which preserves $\mu$. Let $\mathbf S$ be the canonical extension of $S$ to $L^2(\mu)$ 
defined by $(\mathbf S g)(x)= g(Sx)$ for all $g\in L^2(\mu)$. 
Furthermore, assume the following conditions:
\begin{itemize}
\item Let $p(\cdot, \cdot)$ is a transition probability kernel on $X$ w.r.t. $\mu$, while $\mu$ is $p$-invariant, and there is a non-negative function $\delta(\cdot)$ on $X$ such that 
$\delta(y)\mu(\d y)$ is also $S$-invariant and 
for any $x\in X$, we have 
 \begin{equation}\label{Doeblin0}
 p(x,y) \geq \delta(y) \geq 0 \qquad\mbox{and}\quad \int \delta(y) \mu(\d y)=: \delta>0.
 \end{equation} 
\item The transition operator $\mathbf T$ defined w.r.t. the kernel $p(\cdot, \cdot)$  (i.e., for any $g\in L^2(\mu)$, $(\mathbf Tg)(x)= \int p(x,y) g(y) \mu(\d y)$)  commutes with $\mathbf S$, That is, 
$$
\mathbf T \circ \mathbf S= \mathbf S\circ \mathbf T. 
$$ 
\item Finally, $f\in L^2(\mu)$ is an odd function w.r.t. $S$: $f(Sx)= - f(x)$ for any $x \in X$.
\end{itemize}
Then there exists a solution $u\in L^2(\mu)$ of the Poisson equation 
\begin{equation}\label{Poisson}
 (\mathbf I- \mathbf T)u= f, \qquad\mbox{satisfying }\quad \|u\|_{L^2(\mu)}^2 \leq \delta^{-1} \|f\|_{L^2(\mu)}^2.
  \end{equation} 
Moreover, we have a lower bound on the Dirichlet form 
\begin{equation}\label{lb-Dir}
\sigma^2(f):= \int\int \big[u(y)- (\mathbf Tu)(x)\big]^2 \, p(x,y) \mu(\d x) \mu(\d y) \geq \int u^2(y) \delta(y) \mu(\d y) >0. 
\end{equation}
\end{lemma}

\begin{proof} We will prove the lemma in three steps. 

\noindent {\bf Step 1:} Since $f$ is an odd function w.r.t. $S: X\to X$ and $\mu$ is $S$-invariant, we have $\int f \d\mu=0$. 
Moreover, we claim that $\int \mathbf T f \d\mu=0$. Indeed, by invariance of $\mu$ w.r.t. $S$,
it suffices to show that $\mathbf Tf$ is also an odd function w.r.t. $S$. Indeed, $(\mathbf Tf)(Sx)=\int p(Sx,y) f(y) \mu(\d y)= \mathbf S(\mathbf Tf)(x)$, but since $\mathbf S\circ \mathbf T= \mathbf T \circ \mathbf S$,
we have from the last identity that $(\mathbf Tf)(Sx)= \mathbf T(\mathbf Sf)(x)= \int p(x,y) f(Sy) \mu(\d y)= - \int p(x,y) f(y) \mu(\d y)= -(\mathbf Tf)(x)$, which shows that 
$\mathbf Tf$ is odd w.r.t. $S$, and therefore together with
$S$-invariance of $\mu$, it follows that $\int \mathbf Tf\d\mu=0$. Continuing recursively, we then also have $\int \mathbf T^n f\d\mu=0$ for every $n\geq 0$.

\noindent {\bf Step 2:} Next, we prove existence of the solution $u$ in \eqref{Poisson} and the upper bound there. Since $\delta(y)\mu(\d y)$ is also $S$-invariant, we have 
$\int f(y)\delta(y) \mu(\d y)=0$. Thus, 
$$
\begin{aligned}
(\mathbf Tf)(x)=\int f(y) p(x,y) \mu(\d y)
&=  \int f(y) [p(x,y)-\delta(y)] \mu(\d y) \\
&= \int f(y) \sqrt{p(x,y)-\delta(y)} \sqrt{p(x,y)-\delta(y)} \mu(\d y).
\end{aligned}
$$
Now using $\int p(x,y)\mu(\d y)=1$ and $\int \delta(y) \mu(\d y)=\delta>0$, we have from Cauchy-Schwarz inequality that 
$$
\begin{aligned}
|(\mathbf Tf)(x)|^2 &\leq \bigg(\int |f(y)|^2 [p(x,y)-\delta(y)] \mu(\d y)\bigg)\,\, \bigg(\int  [p(x,y)-\delta(y)] \mu(\d y)\bigg) \\
&= (1-\delta) \bigg(\int |f(y)|^2 [p(x,y)-\delta(y)] \mu(\d y)\bigg). \\
\end{aligned}
$$
Integration of the (l.h.s.) above w.r.t. $\mu(\d x)$, together with $\mathbf T$-invariance of $\mu$ (i.e. $\int \mathbf Tg \d\mu=\int g\d\mu$ for every $g\in L^2(\mu)$) now yields
$$
\|\mathbf Tf\|_{L^2(\mu)}^2 \leq  (1-\delta) \|f\|_{L^2(\mu)}^2,
$$
proving that $\mathbf T$ is a contraction on the space of functions in $L^2(\mu)$ that are odd w.r.t. $S$. Since the latter space is $\mathbf T$-invariant, 
it follows that $\|\mathbf T^n f\|_{L^2(\mu)}^2 \leq (1-\delta)^n \|f\|_{L^2(\mu)}^2$ for all $n\geq 0$ and therefore, the Neumann series 
$u=\sum_{n=0}^\infty \mathbf T^n f$ is convergent, proving that $(\mathbf I- \mathbf T) u=f$ has a solution $u\in L^2(\mu)$ with $\|u\|_{L^2(\mu)}^2 \leq \delta^{-1} \|f\|_{L^2(\mu)}^2$. 

\noindent{\bf Step 3:} Let us now prove the desired lower bound \eqref{lb-Dir}. As remarked earlier, since $\int \mathbf T^n f\d\mu=0$ for all $n\geq 0$ and $u= \sum_{n\geq 0} \mathbf T^n f$, we have $\int u\d\mu=0$, and by $\mathbf T$-invariance of $\mu$, also $\int \mathbf T u\d\mu=0$.   Then again using \eqref{Doeblin0} and the last remark, we have 
$$
\begin{aligned}
\sigma^2(f)&= \int\int \big[u(y)- (\mathbf Tu)(x)\big]^2 \, p(x,y) \mu(\d x) \mu(\d y) \\
&\geq \int u^2(y) \delta(y)\mu(\d y)- 2 \bigg(\int u(y) \delta(y) \mu(\d y)\bigg)\bigg(\int (\mathbf Tu)(x) \mu(\d x)\bigg) + \delta \int (\mathbf T u)^2(x) \mu(\d x) \\
&\geq \int u^2(y) \delta(y)\mu(\d y).
\end{aligned}
$$
Since $f$ is not identically equal to $0$, $u$ is not identically equal to zero either and therefore the last term above is strictly positive. 
\end{proof}

\begin{remark}
We have imposed the above asymmetry condition on $f$ to provide a spectral gap in $L^2(\mu)$. However, if $f$ is assumed to be bounded, and it is mean-zero w.r.t. the invariant measure $\mu$,
then the spectral gap (in $L^\infty$) follows easily from the geometric ergodicity as in \eqref{ergodic}, which is a consequence of \eqref{Doeblin0}.
\end{remark}

\begin{remark}\label{MartCLT}
  We end with a standard fact which will be useful for the proof of Theorem \ref{thm2} provided below. 
  Let $(X_n)_{n\geq 0}$ be a Markov chain starting with an $p$-invariant probability distribution $\mu$ with transition kernel $p(\cdot,\cdot)$ 
  For any mean-zero function $f\in L^2(\mu)$, if there is a solution $u \in L^2(\mu)$ of the Poisson equation \eqref{Poisson}, then the rescaled additive functional 
  $\frac{S_n(f)}{\sqrt n}= \frac{f(X_1)+\dots + f(X_n)}{\sqrt n}$ can be approximated by $M_n(f)= \sum_{i=1}^n [u(X_i)- (\mathbf Tu)(X_i)]$ which defines a martingale (w.r.t. the canonical filtration $\sigma(X_i\colon 1\leq i \leq n)$) with stationary and ergodic $L^2(\mu)$ increments:
  $$
  \frac{S_n(f)}{\sqrt n}=\frac 1 {\sqrt n} \bigg[\sum_{i=1}^n u(X_i)- \sum_{i=1}^n (\mathbf T u)(X_i)\bigg]= \frac{M_n(f)}{\sqrt n} + \frac{(\mathbf T u)(X_0)- (\mathbf T u)(X_n)}{\sqrt n}.
  $$
  Since $u\in L^2(\mu)$, the correction term on the right hand side vanishes as $n\to\infty$ and by central limit theorem for martingale differences, $\frac{S_n(f)}{\sqrt n}$ converges to a centered Gaussian law. The ergodic theorem 
  implies that the variance is given by the Dirichlet form $\sigma^2(f)=\int\int [u(y)-\mathbf Tu(x)]^2 p(x,y) \mu(\d x)\mu(\d y)$ as in \eqref{lb-Dir}.\qed
  \end{remark}

\section{Proofs of Theorem \ref{thm2} and Theorem \ref{thm1}} \label{sec-thm2-thm1}

\subsection{\bf Proof of Theorem \ref{thm2} under Assumption \ref{assumeB}}



Recall that the ambient space $\mathcal E=(\omega(t)-\omega(s))_{0\leq s <t \leq 1}$ carries a reference probability measure $\pi$ defined in \eqref{eq:pi}. 
Let $S: X \to X$ be the map $S\xi=-\xi$. First remark that by our assumption on rotational symmetry of $x\mapsto H(\cdot, x)$, 
$\pi$ is $S$-invariant, and for the same reason, the kernel $k(\xi, \xi^\prime)$ is also $S$-invariant in the sense 
\begin{equation}\label{Sinv}
k(\xi,\xi^\prime)= k(S\xi, S\xi^\prime) \qquad\forall \xi,\xi^\prime \in \mathcal E.
\end{equation}
Next we want to show that the tilted kernel $\widetilde\pi(\cdot,\cdot)$ defined in \eqref{eq:pi:tilde} is also $S$-invariant in the above sense. Indeed, with
 with the operator $(\mathscr L g)(\xi)= \int \e^{k(\xi,\xi^\prime)} g(\xi^\prime) \pi(\d\xi^\prime)$ as before, and with $\mathbf S$ defined as in Lemma \ref{DoeblinPoisson2}, 
we have 
\begin{equation}\label{commute}
((\mathbf S \circ \mathscr L)g)(\xi)= \int \e^{k(S\xi, \xi^\prime)} g(\xi^\prime) \pi(\d\xi^\prime)= \int \e^{k(\xi,\xi^\prime)} g(S\xi^\prime) \pi(\d\xi^\prime)= \mathscr L(\mathbf Sg)(\xi)= ((\mathscr L \circ \mathbf S)g)(\xi).
\end{equation} 
If $\psi_0$ is the eigenfunction of $\mathscr L$ for the leading eigenvalue $\lambda_0$ obtained in Lemma \ref{lemma-Perron-Frobenius}, the same argument as above implies that $\mathscr L(\mathbf S \psi_0)= \lambda_0 \mathbf S\psi_0$,
which, together with uniqueness of $\psi_0$ enforces  $\psi_0= \mathbf S \psi_0$, or $\psi_0(\xi)= \psi_0(S \xi)$ for all $\xi\in \mathcal E$. Combined with \eqref{Sinv}, we therefore have the $S$-invariance 
of the tilted kernel $\widetilde\pi(\cdot,\cdot)$: 
$$
\widetilde\pi(S\xi, S\xi^\prime)= \widetilde\pi(\xi,\xi^\prime).
$$ 

Next, recall that $\mu(\d\xi)=\phi(\xi) \pi(\d\xi)$ and by \eqref{ergodic} and from the invariance of $\widetilde\pi$ just established, the the probability measure $\d\mu=\phi \d\pi$ is also $S$-invariant. 
Moreover $\mu$ is also invariant under the transition kernel $\widetilde\pi$ and   
since $\int \widetilde\pi(\xi,\xi^\prime)\pi(\d\xi^\prime)=1$, 
$$
p(\xi,\xi^\prime):= \frac{\widetilde\pi(\xi,\xi^\prime)}{\phi(\xi^\prime)}
$$
is a transition probability kernel w.r.t. $\mu$ and $\mathbb Q^{\ssup{\mu,\widetilde\pi}}$ is the law of the Markov chain with transition probabilities 
$p(\xi,\xi^\prime)\mu(\d\xi^\prime)$ starting with the distribution $\mu$. Since $\widetilde\pi(\xi,\xi^\prime) \geq \delta >0$, we also have 
$$
p(\xi,\xi^\prime)= \frac{\widetilde\pi(\xi,\xi^\prime)}{\phi(\xi^\prime)} \geq \frac\delta{\phi(\xi^\prime)}=:\delta(\xi^\prime), \qquad \mbox{and}\,\, \int \delta(\xi^\prime) \mu(\d\xi^\prime) = \delta \int \pi(\d\xi^\prime)=\delta>0
$$
so that the condition \eqref{Doeblin0} is satisfied. For any $g\in L^2(\mu)$, if $(\mathbf Tg)(\xi)= \int p(\xi,\xi^\prime) g(\xi^\prime) \mu(\d\xi^\prime)= \int \widetilde\pi(\xi,\xi^\prime) g(\xi^\prime) \pi(\d\xi^\prime)$,
then by the same argument as in \eqref{commute} and $S$-invariance of $\widetilde\pi(\cdot,\cdot)$, we have the desired commutation relation 
$\mathbf T \circ \mathbf S= \mathbf S \circ \mathbf T$. Finally, set $f(\xi)= \omega(1)- \omega(0)$ so that $f$ is odd w.r.t. $S$ and also $f\in L^2(\mu)$.\footnote{ Note that obviously $f$ has all moments w.r.t. the base measure 
$\P$ and it can be verified again  using Lemma \ref{lemma:Lambda}, Lemma \ref{lemma-supinf} and \eqref{ergodic} that square integrability of $f$ propagates through to the invariant measure $\mu$.}

With our earlier notation $\xi_j= (\omega(t)-\omega(s))_{j-1\leq s < t \leq j}$, we have $f(\xi_j)= \omega(j)-\omega(j-1)$ 
 and  by  Lemma \ref{DoeblinPoisson2} as well as Remark \ref{MartCLT}, the law of  the additive functional $\omega(n)-\omega(0)=\sum_{j=1}^n f(\xi_j)$
under the stationary Markov chain $\mathbb Q^{\ssup{\mu,\widetilde\pi}}$ with the $\widetilde\pi$-invariant distribution $\mu$ satisfies a central limit theorem: 
 \begin{equation}\label{MarkovCLT}
 \mathbb Q^{\ssup{\mu,\widetilde\pi}}\bigg[\frac{f(\xi_1)+\dots+ f(\xi_n)}{\sqrt n}\in \cdot\bigg] \Rightarrow N\big(0,\sigma^2\mathbf I_{d\times d}\big).
 \end{equation}
Since the above choice of $f$ is not a constant function, $\sigma^2>0$. Now Lemma \ref{lemma2-TV-esti} completes the proof of Theorem \ref{thm2} under Assumption \ref{assumeB} (assuming Lemma \ref{lemma:Lambda} whose proof will be presented in Section \ref{sec:lemma:Lambda}). 
 

\subsection{\bf Proof of Theorem \ref{thm2} under Assumption \ref{assumeA}}\label{sec:proof:assumeA}

We now turn to the proof of Theorem \ref{thm2} under Assumption \ref{assumeA} which demands $t\mapsto H(t,x)$  to have a polynomial decay at infinity, while $x\mapsto H(\cdot,x)$ remains bounded.  The proof will be carried out in three steps. 

\noindent{\bf Step 1:} Let us choose a parameter $L=L(T)$ such that as $T\to\infty$, we have $L\to \infty$, $n:=T/L \to \infty$ as well as $T^2/ L^{2+\eps}\to 0$, recall \eqref{eq:assumeA}. As before, we can divide the interval $[0,T]$ into $n$ subintervals 
$I_{j-1,L}=[(j-1)L,jL]$ of length $L$. Let $\mathscr I_T$ denote the set of all pairs $(I_{j,L},I_{r,L})_{j\ne r}$ such that $|t-s| >L$ for all $s\in I_j$ and $t\in I_r$ (i.e., $I_j$ and $I_r$ are separated by at least one interval $I_p$ for some $p=1,\dots,n$). 
Then $\#\mathscr I_T=O(n^2)$. 

With this notation, we can rewrite the measure $\widehat{\mathbb Q}_T$ in \eqref{eq:Q} as 
\begin{equation}\label{eq:Q.5}
\begin{aligned}
&\widehat{\mathbb Q}_T\bigg(\prod_{j=1}^n \d \xi_{j-1,L}\bigg)
= \frac 1 {Z_{L,T}} \exp\bigg\{\alpha\sum_{j=1}^{n-1}\bigg(\int_{I_{j-1,L}}\int_{I_{j-1,L}} \d s \d t H(t-s, \omega(t)-\omega(s)) \\
&\qquad\qquad\qquad\qquad\qquad\qquad\qquad\qquad+  2\int_{I_{j-1,L}}\int_{I_{j,L}} \d s \d t H(t-s, \omega(t)-\omega(s))\bigg) \\
&\qquad\qquad\qquad+\alpha \sum_{(I_{j,L},I_{r,L})\in\mathscr I_T} \int\int_{I_{j,L}\times I_{r,L}}\,\d s\, \d t H(t-s, \omega(t)-\omega(s)) \bigg\} \,\P\bigg(\prod_{j=1}^n \d \xi_{j-1,L}\bigg) \end{aligned}
\end{equation}
with $\xi_{j,L}=\xi_{j,L}(\omega)=\big\{\omega(t)-\omega(s)\colon s<t,\, s,t\in I_{j,L}\big\}$ denoting the increments on $I_{j,L}$. 
 
 \medskip 
 
Next, exactly as in \eqref{eq:Q2}, we can Markovianize the above interaction and define the transformed measure which is oblivious to interactions between intervals in $\mathscr I_T$. In other words we set 
\begin{align}
&\widehat{\mathbb Q}_{L,T}\bigg(\prod_{j=1}^n \d \xi_{j-1,L}\bigg)
= \frac 1 {Z_{L,T}} \exp\bigg\{\alpha\sum_{j=1}^{n-1}\bigg(\int_{I_{j-1,L}}\int_{I_{j-1,L}} \d s \d t H(t-s, \omega(t)-\omega(s)) \nonumber\\
&\qquad\qquad\qquad+ 2\int_{I_{j-1,L}}\int_{I_{j,L}} \d s \d t H(t-s, \omega(t)-\omega(s))\bigg)\bigg\} \,\P\bigg(\prod_{j=1}^n \d \xi_{j-1,L}\bigg) \label{eq:Q3}\\
&=\frac 1 {\mathscr Z_{L,T}} \exp\bigg\{\sum_{j=1}^{n-1} k(\xi_{j-1,L},\xi_{j,L})\bigg\}\prod_{j=1}^n \pi_L(\d\xi_{j-1,L})\nonumber
\end{align}

where
\begin{equation}\label{Q4}
\begin{aligned}
&Z_{L,T}= \E^\P\bigg[\exp\bigg\{\alpha\sum_{j=1}^{n-1}\bigg(\int_{I_{j-1,L}}\int_{I_{j-1,L}} \d s \d t H(t-s, \omega(t)-\omega(s)) \\
&\qquad\qquad\qquad\qquad\qquad\qquad+  2\int_{I_{j-1,L}}\int_{I_{j,L}} \d s \d t H(t-s, \omega(t)-\omega(s))\bigg)\bigg\}\bigg],\\
&\pi_L\big(\d\xi_{j-1,L}\big)=\frac 1 {Z_L} \exp\bigg[\alpha\int_{I_{j-1,L}}\int_{I_{j-1,L}}\,\,\d t\, \d s\,\,H(t-s, \omega(t)-\omega(s))\bigg]\,\P(\d\xi_{j-1,L}),\\
&k(\xi_{j-1,L},\xi_{j,L})
=2\alpha\int_{I_{j-1,L}} \d s \int_{I_{j,L}}\d t \,H(t-s, \omega(t)-\omega(s)) \quad\mbox{and}\\
& \mathscr Z_{L,T}=\frac{Z_{L,T}}{ Z_L^n }
\end{aligned}
\end{equation}
and $Z_L$ is the normalizing constant that makes $\pi_L$ a probability measure.

\noindent{\bf Step 2:}  We now claim that 
\begin{equation}\label{eq:Q4.5}
\lim_{T\to\infty} \|\widehat{\mathbb Q}_T-\widehat{\mathbb Q}_{L,T}\|_{\mathrm{TV}}=0.
\end{equation}
We will prove the above fact using the relative entropy estimate \eqref{eq3:Pinsker}, stated in Lemma \ref{lemma:Pinsker}.  Then 
by comparing 
\eqref{eq:Q.5} and \eqref{eq:Q3} in the above construction, we have an estimate on the relative 
entropy 
\begin{equation}\label{eq:Q:4.55}
\mathrm{Ent}(\widehat{\mathbb Q}_T| \widehat{\mathbb Q}_{L,T})\leq \log\bigg(\frac{Z_{L,T}}{Z_T}\bigg)+  \E^{\widehat{\mathbb Q}_T}\bigg[\sum_{\mathscr I_T} \int\int_{I_{j,L}\times I_{r,L}}\,\d s\, \d t H(t-s, \omega(t)-\omega(s))\bigg] 
\end{equation}

The second term on the right hand side above can be estimated as 
\begin{equation}\label{eq:Q:4.6}
\begin{aligned}
&\E^{\widehat{\mathbb Q}_T}\bigg[\sum_{\mathscr I_T} \int\int_{I_{j,L}\times I_{r,L}}\,\d s\, \d t H(t-s, \omega(t)-\omega(s))\bigg] \\
&\stackrel{\eqref{eq:assumeA}}{\leq} C \sum_{(I_{j,L},I_{r,L})\in\mathscr I_T} \int\int_{I_{j,L}\times I_{r,L}}\,\frac{\d s\, \d t}{(1+|t-s|)^{2+\eps}} \\
&\leq C \frac 1 {L^{2+\eps}} \sum_{(I_{j,L},I_{r,L})\in\mathscr I_T} \int\int_{I_{j,L}\times I_{r,L}}\,\d s\, \d t \leq C_1 \frac {T^2} {L^{2+\eps}}\to 0, 
\end{aligned}
\end{equation}
where for the last argument we used $\#\mathscr I_T=O(n^2)$ and chose, for instance, $L=L(T)=T/\log T$. The first term on the right hand side in \eqref{eq:Q:4.55} can be estimated similarly. 
Indeed, the normalizing constant $Z_T$ in \eqref{eq:Q.5} can be bounded below as
$$
\begin{aligned}
Z_T &= \E^\P\bigg[\exp\bigg\{\alpha\sum_{j=1}^{n-1}\bigg(\int_{I_{j-1,L}}\int_{I_{j-1,L}} \d s \d t H(t-s, \omega(t)-\omega(s)) \\
&\qquad\qquad +  2\int_{I_{j-1,L}}\int_{I_{j,L}} \d s \d t H(t-s, \omega(t)-\omega(s))\bigg) \\
&\qquad\qquad\qquad+\sum_{(I_{j,L},I_{r,L})\in\mathscr I_T} \int\int_{I_{j,L}\times I_{r,L}}\,\d s\, \d t H(t-s, \omega(t)-\omega(s)) \bigg\}\bigg] \\
&\stackrel{\eqref{eq:assumeA}}{\geq} Z_{L,T}\,\, \exp\bigg[-C_1 \sum_{\mathscr I_T} \int\int_{I_{j,L}\times I_{r,L}}\,\frac{\d s\, \d t}{(1+|t-s|)^{2+\eps}}\bigg] 
\end{aligned}
$$
Again by the same argument as \eqref{eq:Q:4.6}, we then have, 
\begin{equation}\label{eq:Q:4.7}
 \log\bigg(\frac{Z_{L,T}}{Z_T}\bigg) \leq  C \sum_{\mathscr I_T} \int\int_{I_{j,L}\times I_{r,L}}\,\frac{\d s\, \d t}{(1+|t-s|)^{2+\eps}} \to 0 
\end{equation}
as $T\to\infty$. Combining \eqref{eq:Q:4.6} and \eqref{eq:Q:4.7} we then have the desired claim \eqref{eq:Q4.5}.

\noindent{\bf Step 3:}  Given \eqref{eq:Q4.5}, the central limit theorem for the rescaled increment process under $\widehat{\mathbb Q}_T$ amounts to proving the same under the measure $\widehat{\mathbb Q}_{L,T}$. For this purpose, 
we will follow the same approach which is developed in Section \ref{sec-Markovian}.
 In this framework, we only need to check that the assertions in Proposition \ref{lemma-HS} and in Lemma \ref{lemma-supinf} hold now {\it{uniformly}} in $L$. For
for the first estimate (as in Proposition \ref{lemma-HS}) we then need to show that 
\begin{equation}\label{lemma-HS-L}
\sup_L \bigg[\int\int \e^{2k (\xi_{j-1,L},\xi_{j,L})} \pi_L(\d\xi_{j-1,L})\pi_L(\d\xi_{j,L})\bigg]<\infty. 
\end{equation}
 Recall the definition of the kernel $k(\xi_{j-1,L},\xi_{j,L})$ in \eqref{Q4}.  Then any ``off-diagonal term"  can be uniformly estimated as 
 \begin{equation}\label{Q5}
 \begin{aligned}
\sup_L \int_{I_{j-1,L}} \d s \int_{I_{j,L}}\d t \,\,H(t-s, \omega(t)-\omega(s)) 
\stackrel{\eqref{eq:assumeA}}{\leq} C\sup_L \int_{(j-1)L}^{jL}\int_{jL}^{(j+1)L}\frac{\d t \, \d s }{(1+|t-s|)^{2+\eps}} 
\leq   C^\prime
 \end{aligned}
 \end{equation}
 which also implies that 
 $$
\sup_L \sup_{\xi,\xi}\int  \e^{2k (\xi_{j-1,L},\xi_{j,L})} \pi_L(\d\xi_{j-1,L})\pi_L(\d\xi_{j,L}) \leq \e^{C^\prime} \int  \int\,\, \pi_L(\d\xi_{j-1,L})\pi_L(\d\xi_{j,L}) = \e^{C^\prime}, 
 $$
 justifying the validity of \eqref{lemma-HS-L} and proving that $\sup_L \|\mathscr L\|_{\mathrm{HS}} <\infty$ where $\mathscr L$ is now the integral operator 
  corresponding to the kernel $\e^{k(\cdot,\cdot)}$ with $k(\cdot,\cdot)$ defined  in \eqref{Q4}. 
  
The second statement similar to Lemma \ref{lemma-supinf} requires showing existence of constants $C_1, C_2\in(0,\infty)$, such that 
\begin{equation}\label{eq-supinf-L}
\sup_L \, \sup_{\xi,\xi^\prime} \e^{k(\xi,\xi^\prime)}  \leq C_1 \qquad\mbox{and } \,\,\,\,   \inf_L \,\, \inf_{\xi,\xi^\prime} \e^{k(\xi,\xi^\prime)} \geq C_2.
\end{equation}

The upper bound again follows from the same estimate as \eqref{Q5}, while for the lower bound we again invoke $H(t,x)\geq - C (1+|t|)^{2+\eps}$ and use integrability of the latter lower bound over $\int_{I_{j-1,L}} \d s \int_{I_{j,L}}\d t$ uniformly in $L$. Given \eqref{lemma-HS-L} and \eqref{eq-supinf-L}, now we can again repeat the arguments of Lemma \ref{lemma-Perron-Frobenius} to obtain eigenvalues $\lambda_0$ and the associated eigenfunction $\psi_0$ which is strictly positive and bounded away from zero and infinity, uniformly in $L$, which also concludes the proof of Theorem \ref{thm2} under Assumption \ref{assumeA}. 
   \qed

\begin{remark}\label{rmk-Coulomb}
When the interaction potential $V$ is singular (as in Theorem \ref{thm2}) and $\rho$ has long-range dependence (e.g. when $H(t,x)=\e^{-|t|}|x|^{-1}$ in $d=3$ corresponds to the Fr\"ohlich Polaron) it seems conceivable to follow the strategy of Section \ref{sec:proof:assumeA} above. For these cases, instead of estimating $V$ by $\|V\|_\infty$, one might appeal to the estimates of singular functionals w.r.t. the uniform norm $\|\Lambda_V\|_\infty$ obtained in the proof of Proposition \ref{lemma-HS} and Lemma \ref{lemma:Lambda}. 
\end{remark}

 \subsection{\bf Proof of Theorem \ref{thm1}} Let  
$$
\chi_\eps(t,x)=\psi_\eps(t) \phi_\eps(x)= \eps^{-(d+2)} \psi(\eps^{-2} t) \phi(\eps^{-1} x).
$$
 Then 
 $$ 
 \dot B_\eps(t,x)= \int_0^t \int_{\R^d} \chi_\eps(t-s,y-x) \dot B(s,y) \d s \, \d y
 $$ is a Gaussian process with covariance 
 $$
 \mathbf E[\dot B_\eps(t,x)\dot B_\eps(s,y)]= (\psi_\eps\star \psi_\eps)(t-s)\,(\phi_\eps\star\phi_\eps)(x-y).
 $$
Let us define the It\^o integral
$$
M_{\eps,t}(W)=\int_0^t\int_{\R^d} \chi_\eps(t-s,W_s-x) \, \dot B(s,x) \d x \d s,
$$
so that  the earlier remark implies that for any two independent Brownian paths $W$ and $W^\prime$, 
$$
\mathbf E\big[M_{\eps,t}(W)M_{\eps,t}(W^\prime)\big]= \int_0^t\int_0^t (\psi_\eps\star\psi_\eps)(\sigma-s) \, (\phi_\eps\star\phi_\eps)(W_\sigma-W^\prime_s) \, \d\sigma\d s
$$
Therefore, 
\begin{equation}\label{eq0:thm1}
\begin{aligned}
&\mathbf E\big[\exp\big\{\beta \eps^{(d-2)/2} M_{\eps,t}(W)\big\}\big]\\
&=\exp\bigg\{ \frac {\beta^2 \eps^{d-2}} 2  \int_0^t\int_0^t (\psi_\eps\star\psi_\eps)(\sigma-s) \, (\phi_\eps\star\phi_\eps)(W_\sigma-W_s) \, \d\sigma\d s\bigg\} \\
&=\exp\bigg\{ \frac{\beta^2 \eps^{-4}} 2   \int_0^t\int_0^t (\psi\star\psi)(\eps^{-2}\sigma-\eps^{-2}s) \, (\phi\star\phi)(\eps^{-1}W_\sigma-\eps^{-1}W_s) \, \d\sigma\d s\bigg\}\\
&\stackrel{\mathrm{(d)}}{=}\exp\bigg\{ \frac{\beta^2}2   \int_0^{t/\eps^2}\int_0^{t/\eps^2} (\psi\star\psi)(\sigma-s) \, (\phi\star\phi)(W_\sigma-W_s) \, \d\sigma\d s\bigg\}
\end{aligned}
\end{equation}

Let us define the {\it{annealed polymer path measure}} as 
\begin{equation}\label{polym-annealed}
\begin{aligned}
\overline{\mathbb Q}_{\beta,\eps,t}(\d W)=\frac 1 {Z_{\beta,\eps,t}} \mathbf E\Big[\exp\big\{\beta \eps^{(d-2)/2}\,\, M_{\eps,t}(W)\big\}\Big]\ \,\,\d\P_0(\d W) 
\end{aligned}
\end{equation}
where $Z_{\beta,\eps,t}=[\mathbf E\otimes \E_0][\exp\big\{\beta \eps^{(d-2)/2}\,\, M_{\eps,t}(W)\big\}]$ is the averaged polymer partition function.
Then Theorem \ref{thm2} (e.g. by Assumption \ref{assumeB} for the compactly supported function $\rho=\psi\star \psi$ and the bounded function $V=\phi\star \phi$) implies that, for any fixed $\beta>0$ and $t>0$
\begin{equation}\label{annealedCLT}
\overline{\mathbb Q}_{\beta,\eps,t} \big[\eps W_{t\eps^{-2}}\in \cdot\big] \Rightarrow \mathbf N_d\big(0,\sigma^2(\beta)\mathbf I_{d\times d}\big) \qquad \mbox{as }\,\,\eps\to 0.
\end{equation}
 To conclude the proof of Theorem \ref{thm1}, note that $\hat u_\eps(t,x)=u_1(t/\eps^2,x/\eps)$ satisfies the equation 
$$
\partial_t \hat u_\eps= \frac 12 \Delta \hat u_\eps+ \beta \eps^{-2} \dot B_1(t/\eps^2,x/\eps) \hat u_\eps \qquad \hat u_\eps(0,x)=u_0(x)
$$
for any continuous and bounded function $u_0:\R^d\to \R$. Therefore, Feynman-Kac formula and the identity \eqref{eq0:thm1} imply that 
$$
\mathbf E[\hat u_\eps(t,x)]= \E_x\bigg[u_0(\eps W_{t/\eps^2}) \exp\bigg\{\frac{\beta^2}2   \int_0^{t/\eps^2}\int_0^{t/\eps^2} (\psi\star\psi)(\sigma-s) \, (\phi\star\phi)(W_\sigma-W_s) \, \d\sigma\d s\bigg\}\bigg], 
$$
while
$$
Z_{\beta,\eps,t}= \E_x\bigg[\exp\bigg\{\frac{\beta^2}2   \int_0^{t/\eps^2}\int_0^{t/\eps^2} (\psi\star\psi)(\sigma-s) \, (\phi\star\phi)(W_\sigma-W_s) \, \d\sigma\d s\bigg\}\bigg]. 
$$
Recall \eqref{eq:step1}, which in the present context implies that for any fixed $\beta>0$ and $\eps>0$, $Z_{\beta,\eps,t}= \exp\big[\frac t{\eps^2} \theta_0+ \theta_1+ o(1)\big]$ as $\eps\to 0$ for some constants $\theta_0, \theta_1$. The last three assertions,  combined with \eqref{annealedCLT} now imply  Theorem \ref{thm1}. 
\qed

\section{Proof of Lemma \ref{lemma:Lambda}}\label{sec:lemma:Lambda}

Recall that for any function $V:\R^d\to \R$ we denote by $\Lambda_1(x)= \int_0^1 V(W_s-x) \, \d s$ where $(W_s)_{s\geq 0}$ is a $d$-dimensional Brownian motion. 
The law of $(W_s)_{s\geq 0}$ starting at $y\in \R^d$ is denoted by $\P_y$. $\E_y$ stands for the corresponding expectation.

Also, note that we need to prove Lemma \ref{lemma:Lambda} with a function $V$ satisfying Assumption \ref{assumeB}.
Since Lemma \ref{lemma:Lambda} follows immediately if $V$ is bounded, so we only need to handle the case of
 the Coulomb potential $V(x)=\frac 1 {|x|^p}$ in $d\geq 3$ for $p\in (0,\frac 2 {d-2})$ as well as for the Dirac potential $V(x)=\delta_0(x)$ for $d=1$. 
 The proof of Lemma \ref{lemma:Lambda} is based on two important ingredients. The first key estimate is 
\begin{lemma}\label{lemma:M}
Fix any function $V$ satisfying Assumption \ref{assumeB}. Then there exist $a\in (0,1)$, $\rho>1$ and $\alpha>0$ such that  with 
\begin{equation}\label{Mdef}
M: = \int\int_{|x_1-x_2|\leq 1}\d x_1\d x_2\, \bigg[\exp\bigg\{\alpha\bigg(\frac{|\Lambda_1(x_1)-\Lambda_1(x_2)|}{|x_1-x_2|^a}\bigg)^\rho\bigg\}- 1 \bigg],
\end{equation}
we have 
\begin{equation}\label{prooflemma3est2.57}
\E_0(M) <\infty.
\end{equation}
 \end{lemma}
\begin{remark}\label{remark:M}
For the above statement, we need to choose
\begin{itemize}
\item For $d\geq 3$, $V(x)=\frac 1 {|x|^p} $ with $p\in (0,\frac 2 {d-2})$:
$$
\begin{aligned}
 &a=2-p - 2\eps \in (0,1) \,\,\mbox{and } \,\, \rho=\frac 1 {1-(1-\delta)\eps},\quad \mbox{with } \\
& \delta\in \bigg(0,\frac{\frac{p+1}d- \frac p2}{1-\frac p 2}\bigg), \quad \eps\in \bigg(\frac{1-\frac {p+1}d}{1-\delta}, 1- \frac p 2\bigg)
\end{aligned}
$$
\item For $d=1$ and $V(x)=\delta_0(x)$,
$$
a=1-2\eps \in (0,1) \,\,\,\, \mbox{and } \,\,\,\, \rho=\frac 1 {1-\eps}>1, \quad\,\,\mbox{with }\,\, \eps\in \bigg(\frac 14,\frac1 2\bigg) 
$$
\end{itemize}
\end{remark}\qed


The second ingredient needed for the proof of Lemma \ref{lemma:Lambda} is the following multidimensional version of Garsia-Rodemich-Rumsey estimate \cite[p.\,60]{SV79}.

\begin{lemma}\label{GRR}
Let $q(\cdot)$ and $\Psi(\cdot)$ be strictly increasing continuous functions on $[0,\infty)$ so that $q(0)=\Psi(0)=0$ and $\lim_{t\uparrow\infty} \Psi(t)=\infty$. If $f\colon \R^d\to \R$ is continuous on the closure of the ball $B_{2r}(z)$ for some $z\in \R^d$ and $r>0$, then the bound 
\begin{equation}\label{GRR1}
\int_{B_r(z)} \d x \int_{B_r(z)}  \d y\,\, \Psi\bigg(\frac{|f(x)-f(y)|}{q(|x-y|)}\bigg) \leq M<\infty,
\end{equation}
implies that
\begin{equation}\label{GRR2}
\big|f(x)- f(y)\big| \leq 8 \int_0^{2|x-y|} \Psi^{-1}\bigg(\frac{M}{\gamma u^{2d}}\bigg) \, q(\d u), \qquad x,y\in B_r(z),
\end{equation}
for some constant $\gamma$ that depends only on $d$.
\end{lemma}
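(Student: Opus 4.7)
The plan is to prove Lemma \ref{GRR} by a dyadic chaining argument, the classical multidimensional analogue of the one-dimensional Garsia--Rodemich--Rumsey lemma. Fix $x, y \in B_r(z)$, set $\delta = |x-y|$, and introduce a sequence of shrinking balls of dyadic radii $r_n \sim \delta/2^n$ centered near $x$ and near $y$, all fitting inside $B_{2r}(z)$ where $f$ is continuous. Every product pair of such balls sits inside $B_r(z) \times B_r(z)$ (after suitable normalization), so the hypothesis \eqref{GRR1} applies to it.

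The key ingredient is a Chebyshev-type pigeonhole: for any balls $B, B' \subset B_r(z)$, the set of pairs $(u,v) \in B \times B'$ violating
\begin{equation*}
\Psi\!\left(\frac{|f(u)-f(v)|}{q(|u-v|)}\right) \le \frac{4 M}{|B|\,|B'|}
\end{equation*}
has Lebesgue measure at most $|B|\,|B'|/4$. Applying this iteratively to consecutive dyadic scales, together with a Fubini/selection argument to ensure the good sets at scales $n$ and $n+1$ are compatible, I would construct two sequences $x_n \to x$ and $y_n \to y$ lying in the respective $n$-th dyadic balls, and a bridging pair $(x_0,y_0)$, such that each consecutive pair satisfies the above pointwise bound with $|B|\,|B'| \asymp c_d^2 r_n^{2d}$.

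By continuity of $f$ on $\overline{B_{2r}(z)}$, telescoping yields
\begin{equation*}
|f(x)-f(y)| \le |f(x_0)-f(y_0)| + \sum_{n\ge 0}\bigl[|f(x_n)-f(x_{n+1})| + |f(y_n)-f(y_{n+1})|\bigr],
\end{equation*}
and each summand at scale $n$ is bounded by $q(2 r_n)\,\Psi^{-1}(C M/(c_d^2 r_n^{2d}))$ by inverting $\Psi$. Since $q$ and $\Psi^{-1}$ are monotone, the dyadic geometric sum compares, via the change of variables $u = 2 r_n$, to the Riemann--Stieltjes integral $\int_0^{2\delta} \Psi^{-1}(M/(\gamma u^{2d}))\, q(du)$, with $\gamma$ absorbing all dimensional constants; the prefactor $8$ emerges from a careful tally of the contributions of the two chains plus the bridging term.

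The main obstacle will be the selection step: the pigeonhole constants must be tuned so that, at each scale, the ``good'' set intersects the smaller ball in more than half its measure, which is precisely what guarantees that the iterative selection can be carried out consistently for all $n$ (the Fubini part asserting that a positive-measure subset of ``good $u$'s'' carries ``good $v$-slices'' at the next scale). Equally, the exponent $2d$ in the final integrand---versus the $2$ appearing in the one-dimensional Garsia--Rodemich--Rumsey---is the fingerprint of the \emph{product-measure} form of \eqref{GRR1}: the threshold scales as $M/(|B|\,|B'|) \asymp M/r_n^{2d}$, and keeping this exponent accurate through the Riemann--Stieltjes comparison is the bookkeeping-heavy part of the argument.
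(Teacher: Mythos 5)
The paper provides no proof of Lemma~\ref{GRR}; it quotes the statement directly from Stroock and Varadhan \cite[p.\,60]{SV79}. Your dyadic-chaining sketch --- Chebyshev pigeonhole on the product integral, Fubini selection of compatible chain points at consecutive scales, two telescoping chains from a common bridge to $x$ and to $y$, and a Riemann--Stieltjes comparison passing from the dyadic sum to $\int_0^{2|x-y|}\Psi^{-1}(M/(\gamma u^{2d}))\,q(\d u)$ --- is precisely the textbook argument behind that citation, so the plan is sound and the $2d$-exponent and factor of $8$ do emerge as you indicate. One imprecision to fix when writing it out: dyadic balls around $x,y\in B_r(z)$ of radius comparable to $|x-y|$ need not lie inside $B_r(z)$, only inside $B_{2r}(z)$; the standard remedy is to integrate over the intersections of these balls with $B_r(z)$, each of which retains a fixed dimensional fraction of the full ball volume because the center lies in $B_r(z)$, and the extra constant this introduces is absorbed in $\gamma$. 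The iterative selection step you flag (ensuring the "good" sets at scales $n$ and $n+1$ overlap in positive measure) is indeed where the constants in the pigeonhole must be tuned, and a complete proof must carry this out explicitly, but there is no conceptual obstruction.
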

We will complete the proof of Lemma \ref{lemma:Lambda} with the help of the above two results in Section \ref{subsec:lemma:Lambda}. Let us now turn to the proof of Lemma \ref{lemma:M}.

\subsection{{\bf{Proof of Lemma \ref{lemma:M}}}}

\subsubsection {\bf Proof of Lemma \ref{lemma:M} for $d\geq 3$}

We will first prove the lemma when $V(x)=\frac 1 {|x|^p}$ if $p\in (0,2/(d-2))$ and $d\geq 3$. 
We will have several occasions to use the following simple inequality in the proof of Lemma \ref{lemma:M}.

\begin{lemma}\label{lemma:bound}
Let $b>a>0$. If $p\geq 1$, then 
$$
p a^{p-1} (b-a) \leq b^p -a^p \leq p b^{p-1} (b-a).
$$
If $p\in (0,1)$, the reverse inequality holds.\end{lemma}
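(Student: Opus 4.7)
The cleanest route is to apply the mean value theorem to the function $f(x) = x^p$ on the interval $[a,b]$. Since $f$ is differentiable on $(0,\infty)$ with $f'(x) = p x^{p-1}$, there exists some $c \in (a,b)$ such that
$$
b^p - a^p = f'(c)(b-a) = p c^{p-1} (b-a).
$$
It then remains only to bound $c^{p-1}$ from above and below in terms of $a^{p-1}$ and $b^{p-1}$ using the monotonicity of $x \mapsto x^{p-1}$ on $(0,\infty)$.

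If $p \geq 1$, the exponent $p-1 \geq 0$ is non-negative, so $x \mapsto x^{p-1}$ is non-decreasing, and from $a < c < b$ we obtain $a^{p-1} \leq c^{p-1} \leq b^{p-1}$; multiplying by $p(b-a) > 0$ yields the claimed chain of inequalities. If $p \in (0,1)$, then $p-1 < 0$ and $x \mapsto x^{p-1}$ is strictly decreasing, so the same argument gives $b^{p-1} \leq c^{p-1} \leq a^{p-1}$, which reverses both inequalities.

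Alternatively, and perhaps more transparently, one can write $b^p - a^p = \int_a^b p x^{p-1}\,dx$ and use the monotonicity of the integrand directly: for $p \geq 1$ the integrand lies between $p a^{p-1}$ and $p b^{p-1}$, and integrating over an interval of length $b-a$ gives the bounds; the case $p \in (0,1)$ is handled identically with the bounds swapped. There is no real obstacle here, as the statement is essentially a reformulation of the monotonicity of the derivative of $x^p$; the only minor point worth noting is that the positivity assumption $a > 0$ is used to ensure that $x^{p-1}$ is well-defined and finite on $[a,b]$ in the case $p < 1$.
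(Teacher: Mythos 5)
Your proof is correct and follows essentially the same route as the paper: apply the mean value theorem to $x\mapsto x^p$ on $[a,b]$ and then use the monotonicity of the derivative $p\,x^{p-1}$ to place $c^{p-1}$ between $a^{p-1}$ and $b^{p-1}$. The alternative integral formulation you mention is an equivalent packaging of the same idea.
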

\begin{proof}
By the mean-value theorem, $b^p-a^p=(b-a) g^\prime(\xi)$ for some $\xi\in(a,b)$ where $g(x)=x^p$. Now if $p\geq 1$, then $g^\prime$ is increasing, meaning $g^\prime(\xi)\in [g^\prime(a),g^\prime(b)]$ implying the requisite inequality. If $p\in (0,1)$, then $g^\prime$ is decreasing, which reverses the inequality.
\end{proof}

The proof of Lemma \ref{lemma:M} in $d\geq 3$ now splits into two main tasks. 
First, we want to show that for 
any $\delta$ and $\eps$ as in Remark \ref{remark:M} (for $d\geq 3$) and $a=2-p- 2\eps$ and $\rho= \frac 1{1-(1-\delta)\eps}$,  
\begin{equation}\label{eqlemma1}
\sup_{\heap{x_1,x_2\in\R^d}{|x_1-x_2|\leq 1}} \sup_{x\in \R^d}\E_x \bigg[\exp\bigg\{\alpha \bigg(\frac{\big|\Lambda_1 (x_1)-\Lambda_1 (x_2)\big|}{|x_1-x_2|^{a}}\bigg)^\rho\bigg\}\bigg] <\infty.
\end{equation}
for some $\alpha\in (0,\infty)$. Let us first assume \eqref{eqlemma1} and conclude 

\medskip

\noindent{\bf Proof of Lemma \ref{lemma:M} for $d\geq 3$ (Assuming \eqref{eqlemma1}):} It suffices to show that 
there exists a constant $\alpha_1=\alpha_1(\eps)>0$ such that the random variable
\begin{equation}\label{Mdef2}
M=\int_{\R^d}\d x_1 \int_{\R^d}\d x_2\,\1\{|x_1- x_2|\leq 1\} \,\, \bigg[\exp\bigg\{\alpha_1 \bigg(\frac{\big|\Lambda_1(x_1)-\Lambda_1(x_2)\big|}{|x_1-x_2|^{a}}\bigg)^\rho\bigg\}-1\bigg]
\end{equation}
has a finite expectation under $\P_0$. For this purpose, by Fubini's theorem it suffices to show that 
\begin{equation}\label{claim2}
\int\int_{|x_1- x_2|\leq 1} \d x_1 \d x_2 \,\, \E\bigg[\exp\bigg\{\alpha_1 \bigg(\frac{\big|\Lambda_1(x_1)-\Lambda_1(x_2)\big|}{|x_1-x_2|^{a}}\bigg)^\rho\bigg\}-1\bigg] < \infty.
\end{equation}
We will now prove \eqref{claim2}. We can write 
$$
\R^d\subset \bigcup_{n=0}^\infty\big\{x\in\R^d\colon n\leq |x|< n+1\big\}
$$
and for some $\theta\in (0,1)$ to be chosen later, we put $\tau_n= \inf\{t>0\colon\, |W_t| > n- n^\theta\}$.  
Then 
\begin{equation}\label{taubigless1}
\begin{aligned}
&\int\int_{|x_1- x_2|\leq 1} \d x_1 \d x_2 \,\, \E\bigg[\exp\bigg\{\alpha_1 \bigg(\frac{\big|\Lambda_1(x_1)-\Lambda_1(x_2)\big|}{|x_1-x_2|^{a}}\bigg)^\rho\bigg\}-1\bigg] \\
&\leq \sum_{n=0}^\infty \int_{|x_1|\in [n,n+1)} \d x_1 \int_{B_1(x_1)} \d x_2 \,\, \bigg[ \E\bigg\{\1_{\{\tau_n>1\}} \bigg(\exp\bigg\{\alpha_1 \bigg(\frac{\big|\Lambda_1(x_1)-\Lambda_1(x_2)\big|}{|x_1-x_2|^{a}}\bigg)^\rho\bigg\}-1\bigg)\bigg\}
\\
&\qquad\qquad\qquad\qquad\qquad+\E\bigg\{\1_{\{\tau_n\leq1\}} \bigg(\exp\bigg\{\alpha_1 \bigg(\frac{\big|\Lambda_1(x_1)-\Lambda_1(x_2)\big|}{|x_1-x_2|^{a}}\bigg)^\rho\bigg\}-1\bigg\}\bigg)\bigg].
\end{aligned}
\end{equation}
To estimate the first expectation, we observe that for $|x_1| \in [n,n+1)$  and for $|x_2-x_1|<1$, if $\tau_n >1$, then $|W_s- x_1| > n^\theta$ and $|W_s- x_2| > n^\theta - 1$ for any $s\in[0,1]$. 
Recall that for $V(x)=\frac 1 {|x|^p}$, 
$$
\big|\Lambda(x_1)-\Lambda(x_2)\big| =\bigg| \int_0^1 \d s \, \bigg(\frac 1 {|W_s-x_1|^p} - \frac 1 {|W_s-x_2|^p}\bigg) \bigg| 
$$
Then in order to estimate the above quantity, by Lemma \ref{lemma:bound} and triangle inequality, it follows that 
for any $n\in\N$, on the event $\{\tau_n>1\}$,
$$ 
\begin{aligned}
\frac{\big|\Lambda_1(x_1)-\Lambda_1(x_2)\big|}{|x_1-x_2|^{a}} &\leq C \frac {|x_1-x_2|}{|x_1-x_2|^{2-p-2\eps}} \, \int_0^1 {\d s}\bigg(\frac 1{|W_s -x_1| ^p |W_s- x_2|}+ \frac 1 {|W_s -x_1|  |W_s- x_2|^p}\bigg)\\
&\leq C |x_1-x_2|^{p-1+2\eps} n^{-(p+1)\theta}\\
&\leq C  n^{-(p+1)\theta}.
\end{aligned}
$$
We remark that in the first upper bound above we also used the choice $a=2-p-2\eps$ as in Remark \ref{remark:M}. There we also chose $\delta>0$ and $\eps>\frac {1- \frac{p+1}d}{1-\delta}> 1- \frac{p+1}d$, so that  $p-1 + 2\eps >(p+1)(1-\frac 2d)\geq 0$ in $d\geq 3$, which justifies the third upper bound.

Then the last upper bound dictates that  
\begin{equation}\label{taubig1}
\begin{aligned}
&\sum_{n=0}^\infty \int_{|x_1|\in [n,n+1)} \d x_1 \int_{B_1(x_1)} \d x_2 \,\, \E\bigg\{\1_{\{\tau_n>1\}} \bigg(\exp\bigg\{\alpha_1 \bigg(\frac{\big|\Lambda_1(x_1)-\Lambda_1(x_2)\big|}{|x_1-x_2|^{a}}\bigg)^\rho\bigg\}-1\bigg)\bigg\} \\
&\leq C \sum_{n=0}^\infty \bigg(\e^{\alpha_1 C^\rho n^{-(p+1)\theta\rho}} - 1\bigg) |A_{n,n+1}| 
\end{aligned}
\end{equation}
where $A_{n,n+1}$ denotes the annulus $B_{n+1}(0)\setminus B_n(0)\subset \R^d$. In the above summand since the first term is of size $O(n^{-(p+1)\theta\rho})$ and $|A_{n,n+1}|=O(n^{d-1})$, the above sum is finite if 
$\theta> \frac d{(p+1)\rho}$. 
Recall Remark \ref{remark:M} where we chose $\eps>\frac {1- \frac{p+1}d}{1-\delta}$ and $\rho=\frac 1 {1-(1-\delta)\eps}$. Therefore, $1-(1-\delta)\eps<\frac{p+1}d$ and hence $\rho= \frac 1 {1-(1-\delta)\eps} > \frac d{p+1}$, so that we can choose $\theta\in (0,1)$ with $\theta > \frac d {(p+1)\rho}$, as desired. 

Now the second expectation in \eqref{taubigless1} can be bounded by the Cauchy-Schwarz inequality and \eqref{eqlemma1} so that 
for a suitable $\alpha_1$, any $x_1,x_2\in\R^d$ with $|x_1-x_2|\leq1$, we have 
$$
\begin{aligned}
\E\bigg[\1_{\{\tau_n\leq1\}} &\bigg\{\exp\bigg\{\alpha_1 \bigg(\frac{\big|\Lambda_1(x_1)-\Lambda_1(x_2)\big|}{|x_1-x_2|^{a}}\bigg)^\rho\bigg\}-1\bigg\}\bigg] \\
&\leq \P\big(\tau_n\leq 1\big)^{\frac 12} \,\, \E\bigg[\exp\bigg\{2\alpha_1 \bigg(\frac{\big|\Lambda_1(x_1)-\Lambda_1(x_2)\big|}{|x_1-x_2|^{a}}\bigg)^\rho\bigg\}\bigg]^{\frac 12} \\
&\leq C \P\bigg(\max_{[0,1]} |W| > n- n^\theta\bigg)^{\frac 12},
\end{aligned}
$$
where $C$ does not depend on $x_1,x_2$. Since the last probability is of order $\e^{-c n^2}$, the second sum on $n$ in \eqref{taubigless1} is obviously finite. This, combined with the finiteness of the sum in \eqref{taubig1}, proves \eqref{claim2}. Lemma \ref{lemma:M} is therefore also proved, assuming the estimate \eqref{eqlemma1}. 
\qed

\medskip 

It remains to provide the 

\noindent{\bf Proof of \eqref{eqlemma1}:}  We now fix $x_1, x_2\in \R^3$ with $|x_1-x_2|\leq 1$ and denote
$$
\mathscr V_{x_1,x_2}(y)= \frac 1{|y-x_1|^p}- \frac 1{|y-x_2|^p} 
$$
For any $x_1,x_2$ satisfying $|x_1-x_2|\leq 1$, and $a=2-p-2\eps\in (0,1)$, we estimate, again by Lemma \ref{lemma:bound}, 
\begin{equation}\label{Vhatesti}
\begin{aligned}
|\mathscr V_{x_1,x_2}(y)|
&=\frac{\big| |y-x_2|^p-|y-x_1|^p\big|}{|y-x_1|^p\,|y-x_2|^p} \\
&\leq C {|x_1-x_2|}\bigg[\frac 1 {|y-x_1|^p\,|y-x_2|}+ \frac 1 {|y-x_1|^p\,|y-x_2|}\bigg]\\
&\leq |x_1-x_2|^a\,\,  \big[|y-x_2|^{1-a}+|y-x_1|^{1-a}\big] \bigg[\frac 1 {|y-x_1|^p\,|y-x_2|}+ \frac 1 {|y-x_1|\,|y-x_2|^p}\bigg]
 \end{aligned}
\end{equation}
where we have used $(r+s)^{1-a}\leq r^{1-a}+s^{1-a}$ for any $r,s\geq0$. In order to prove \eqref{eqlemma1} using the above estimate, we need a suitable upper bound on the 
truncated Greens's function. Let us denote by $\eta=|x-y|^2\wedge 1$ and $z=\frac {|y-x|^2}{2t}>0$ note that, for any $b>0$, the map $w\mapsto w^{d/2+b} \e^{-w}$ is bounded. Then, 
$$
\begin{aligned}
\int_0^{\eta}\d t\,\frac{\e^{-|y-x|^2/2t}}{t^{d/2}} 
&\leq C|y-x|^{-d-2b}\int_0^{\eta}\d t t^b\\
&\leq C |y-x|^{-d-2b}\big(|y-x|^2\wedge 1\big)^{1+b},\qquad x,y\in\R^d.
\end{aligned}
$$
and 
$$
\int_{\eta}^1 \d t\,\frac{\e^{-|y-x|^2/2t}}{t^{d/2}} \leq\int_{\eta}^1 \d t\,t^{-d/2}
\leq \big[|y-x|^2\wedge 1\big]^{1-d/2}-1,
$$
implying  
\begin{equation}\label{trankernelesti}
\int_{0}^1 \d t\,\frac{\e^{-|y-x|^2/2t}}{t^{d/2}} \leq C \frac 1{|y-x|^{d-2}(1+|y-x|)^b}.
\end{equation}
Then \eqref{Vhatesti}, the above bound as well as symmetry in $x_1$ and $x_2$ ensure that 
\begin{equation}\label{eqlemma2}
\sup_{\heap{x\in \R^d}{|x_1-x_2| \leq 1}}  \,\, \E_x\bigg[\int_0^1 \frac{|V(W_s)|}{|x_1-x_2|^{a\rho}} ^\rho\,\d s\bigg] <\infty
\end{equation}
provided we show that 
$$
\sup_{\heap{x_1,x_2\in\R^d\colon}{|x_1-x_2|\leq 1}}\sup_{x\in\R^d}\int_{\R^d}\,\frac {\d y}{(1+|y-x|)^b}\,\frac1{|y-x_1|^{p\rho}}\times\frac 1{|y-x|^{d-2}}\times\frac1{|y-x_2|^{\rho a}}<\infty.
$$
We now choose suitable constants $\gamma_i>1$ such that $\sum_{i=1}^3 \gamma_i^{-1}=1$ and apply H\"older's inequality w.r.t. the measure $\frac {\d y}{(1+|y-x|)^b}$. It turns out that
$$
\begin{aligned}
I_1:=\int_{\R^d} \frac{\d y}{(1+|y|)^b} \frac 1 {|y|^{\rho p \gamma_1}} &=\int_0^\infty \frac {\d r}{(1+r)^b} \, \frac {r^{d-1}} {r^{\rho p \gamma_1}} \\
&\leq \int_0^1 \frac{\d r} {r^{\rho p \gamma_1- d +1}} + \int_1^\infty \frac{\d r}{(1+r)^b \, r^{\rho p \gamma_1- d +1}}.
\end{aligned}
$$
Note that the second integral is finite provided we choose $b>d$ and the first integral is finite if $\rho p \gamma_1- d+ 1 <1$, i.e. if we choose 
\begin{equation}\label{gamma1}
\gamma_1< \frac d {\rho p}.
\end{equation}
Likewise, 
$$
\begin{aligned}
I_2&:=\int_0^\infty \frac {\d r}{(1+r)^b} \, \frac {r^{d-1}} {r^{(d-2) \gamma_2}} \\
&\leq \int_0^1 \frac{\d r} {r^{(d-2) \gamma_2- d +1}} + \int_1^\infty \frac{\d r}{(1+r)^b \, r^{(d-2) \gamma_2- d +1}},
\end{aligned}
$$
and again the second integral is finite provided we choose $b>d$ and the first integral is finite if we choose 
\begin{equation}\label{gamma2}
\gamma_2< \frac d {d-2}
\end{equation}
Finally, 
$$
\begin{aligned}
I_3&:=\int_0^\infty \frac {\d r}{(1+r)^b} \, \frac {r^{d-1}} {r^{\rho a \gamma_3}} \\
&\leq \int_0^1 \frac{\d r} {r^{\rho a \gamma_3- d +1}} + \int_1^\infty \frac{\d r}{(1+r)^b \, r^{\rho a \gamma_3- d +1}}
\end{aligned}
$$
and with $b<d$ the second integral is finite  and the first integral is finite if we choose 
\begin{equation}\label{gamma3}
\gamma_3< \frac d {\rho a}
\end{equation}
We choose $\gamma_1>1$ and $\gamma_2>1$ satisfying \eqref{gamma1}-\eqref{gamma2}. Since $\frac 1 {\gamma_3}=1- \frac 1 {\gamma_1}-\frac 1 {\gamma_2}$, we must have 
$\frac 1 {\gamma_3} <1- \frac{\rho p}d - \frac {d-2} d$ and together with \eqref{gamma3}, we must have $\rho< \frac 2 {p+a}$. But with our choice $\rho=\frac 1 {1-(1-\delta)\eps}$ and $a= 2-p - 2\eps$ (recall Remark \ref{remark:M}), the desired requirement $\rho< \frac 2 {p+a}$ is met. Hence, the proof of  \eqref{eqlemma2} is completed.\footnote{The above argument for showing finiteness of $I_1, I_2, I_3$ resembles an idea similar to the proof of \cite[Lemma 2.1, p.2222]{KM15} for the particular case $V(x)=1/|x|$ in $d=3$, which however has a minor gap stemming from an erroneous application of the H\"older's inequality.}

Finally we can choose some $\alpha\in (0,\infty)$ (sufficiently small, if necessary), so that by Jensen's inequality, 
$$
\sup_{\heap{x\in \R^d}{|x_1-x_2| \leq 1}}  \,\, \E_x\bigg[\alpha \bigg(\frac{\big|\Lambda_1(x_1)-\Lambda_1(x_2)|}{|x_1-x_2|^a}\bigg)^\rho\,\bigg] =\kappa<1, 
$$
and subsequently, by Khas'misnki's lemma again, 
$$
\sup_{\heap{x\in \R^d}{|x_1-x_2| \leq 1}}  \,\, \E_x\bigg[\exp\bigg\{\alpha \bigg(\frac{\big|\Lambda_1(x_1)-\Lambda_1(x_2)|}{|x_1-x_2|^a}\bigg)^\rho\bigg\}\bigg] \leq \frac 1 {1-\kappa}<\infty. 
$$
Hence, \eqref{eqlemma1} is proved $V(x)=\frac 1 {|x|^p}$ for $p\in (0,2/(d-2))$ and $d\geq 3$. 
\qed

\subsubsection{{\bf Proof of Lemma \ref{lemma:M} for $d=1$}}

 Note that for the desired estimate \eqref{eqlemma1} for $V(x)=\delta_0(x)$ in $d=1$ we are no longer allowed to invoke interpolation and  triangle inequality as we did in \eqref{Vhatesti}, nor can we apply Khas'misnki's lemma. Instead, we will use the symmetry properties of the function 
\begin{equation}\label{eq0Dirac}
\mathscr V_{x_1,x_2}(y)= \delta_{x_1}(y)- \delta_{x_2}(y) \qquad \mbox{ such that }\,\, \mathscr V_{x_1,x_2}(y)=0\quad\mbox{for }\, |y|>h:=\frac 12 |x_1-x_2|. 
\end{equation}
which satisfies 
\begin{equation}\label{eq1Dirac}
\int_\R \mathscr V_{x_1,x_2}(y) \d y=0 \quad \mbox{and }\,\, \int_\R| \mathscr V_{x_1,x_2}(y) | \d y<\infty.
\end{equation}
Using the above symmetry property a technique was developed in \cite{DV77}
 to prove large deviations and law of iterated logarithm for one-dimensional Brownian local times in the uniform metric. 
 For the sake of completeness, we will collect the relevant material from there  
to show the requisite claim \eqref{eqlemma1}. 
\begin{lemma}\label{lemmaDirac}
Let $\mathscr V_{x_1,x_2}$ be defined as in \eqref{eq0Dirac}. Fix $\eps\in (\frac 14,\frac 12)$, $a=1-2\eps$ and $\rho=\frac 1 {1-\eps}$ as in Remark \ref{remark:M}. Then, for some $\alpha>0$, and any $x\in \R$, 
\begin{equation}\label{Diracitem1}
\E_x\bigg[\exp\bigg\{\alpha\bigg| \int_0^1 \d s \, \frac{V_{x_1,x_2}(W_s)}{|x_1-x_2|^{a}}\bigg|^{\rho}\bigg\}\bigg]<\infty.
\end{equation}
and also with $\Lambda_1(x)= \int_0^1 \delta_0(W_s-x)\ d s$
\begin{equation}\label{Diracitem2}
\int\int_{|x_1-x_2|\leq 1} \d x_1 \d x_2 \,\, \bigg[\exp\bigg\{\alpha\bigg(\frac{\Lambda_1(x_1)-\Lambda_1(x_2)}{|x_1-x_2|^a}\bigg)^\rho\bigg\}-1 \bigg] =:M< \infty. 
\end{equation} 
almost surely. 
\end{lemma}
\begin{proof}
We first prove \eqref{Diracitem1}. Note that we can write transition density of the one-dimensional Brownian path as $p_t(x,y)= C t^{-1/2} \mathcal F(t^{-1/2}(x-y))$ with 
$$
\mathcal F(y)= c_0 \int_\R \e^{-\lambda^2} \e^{-\mathbf i \lambda y} \d \lambda 
$$
we have $\|\mathcal F\|_\infty<\infty$ and  for any $a \in (0,1/2]$. 
\begin{equation}\label{eq2Dirac}
\begin{aligned}
|\mathcal F(y)- \mathcal F(0)| \leq c_0 \int_\R \d \lambda \e^{-\lambda^2} |\e^{\mathbf i \lambda y}- 1| 
&= c_1 \int_\R \d \lambda \e^{-\lambda^2} \sqrt{|1-\cos(\lambda y)|} \\
&\leq c_2 \int_\R \d \lambda \e^{-\lambda^2} |\sin(\lambda y/2)| \leq c_3 |y|^{2a}. 
\end{aligned}
\end{equation} 
For any positive integer $n$, let us write $\int_<\d s=\int_{0\leq s_1\leq s_2\leq \dots\leq s_n\leq 1} \d s_1\d s_2\dots \d s_n$. Then, for any $u \in [0,1]$, 
\begin{equation}\label{eq3Dirac}
\begin{aligned}
&\E_x\bigg[\big(\int_0^1 \d s V_{x_1,x_2}(W_s)\big)^{2n}\bigg]
= (2n)! \int_< \d s\int_{\R^{2n}} \d y \prod_{j=0}^{2n-1} \bigg[\frac{ \mathscr V_{x_1,x_2}(y_{j+1})}{(s_{j+1}-s_j)^{1/2}} \, \mathcal F\bigg(\frac{y_{j+1}- y_j}{(s_{j+1}-s_{j})^{1/2}}\bigg)\bigg] \\
&\stackrel{\eqref{eq1Dirac}} {=} (2n)! \int_< \d s\int_{\R^{2n}} \d y \prod_{j=0}^{2n-1} \bigg(\frac{ \mathscr V_{x_1,x_2}(y_{j+1})}{(s_{j+1}-s_j)^{1/2}}\bigg)\\
 &\qquad\qquad\qquad\prod_{k=0}^{n-1}\bigg[\mathcal F\bigg(\frac{y_{2k+1}- y_{2k}}{(s_{2k+1}-s_{2k})^{1/2}}\bigg) \mathcal F\bigg(\frac{y_{2k+2}- y_{2k}}{(s_{2k+2}-s_{2k})^{1/2}}\bigg)
 - \mathcal F^2(0) \bigg]
\end{aligned}
\end{equation}
Now for $|z_1|\leq h$ and $|z_2| \leq h$, we have 
$$
\big| \mathcal F(t_1^{-1/2} z_1) \mathcal F(t_2^{-1/2} z_2)- \mathcal F^2(0)\big| \stackrel{\eqref{eq2Dirac}}{\leq} C\|\mathcal F\|_\infty \big[t_1^{-a} |z_1|^{2a}+ t_2^{-a} |z_2|^{2a}\big] \leq Ch^{2a} \big[t_1^{-a}+ t_2^{-a}\big].
$$

In \eqref{eq3Dirac} we want to 
use the last estimate and the upper bound $\int_\R \d y |\mathscr V_{x_1,x_2}(y)| <\infty$ as well as the fact that $\mathscr V$ vanishes outside $B_h(0)$ (recall \eqref{eq0Dirac}). Then we have 
\begin{equation}\label{eq4Dirac}
\begin{aligned}
&\E_x\bigg[\big|\int_0^1 \d s \mathscr V_{x_1,x_2}(W_s)\big|^{2n}\bigg]\\
&\leq (2n)! C^n h^{2na} \int_< \d s \prod_{j=0}^{2n-1} \bigg(\frac{ 1}{(s_{j+1}-s_j)^{1/2}}\bigg) \, \prod_{k=0}^{n-1}\bigg[\frac 1 {|s_{2k+1}- s_{2k}|^{a}}+ \frac 1 {(s_{2k+2}-s_{2k})^{a}}\bigg] \\
&=(2n)! C^n h^{2na} \int_< \d s \prod_{j=0}^{n-1} \bigg[\bigg(\frac 1 {|s_{2j+1}- s_{2j}|^{a}}+ \frac 1 {(s_{2j+2}-s_{2j})^{a}}\bigg) 
\bigg(\frac{ 1}{(s_{2j+1}-s_{2j})^{1/2}}\bigg) \\
&\qquad\qquad\qquad\qquad\qquad\qquad\qquad\times  \bigg(\frac{ 1}{(s_{2j+2}-s_{2j+1})^{1/2}}\bigg)\bigg]
\end{aligned}
\end{equation}
Let 
$$
I=\int_x^y \bigg[\frac 1 {(z-x)^{a}}+ \frac 1 {(y-z)^{a}}\bigg] \frac 1 {(y-x)^{1/2}} \, \frac 1 {(y-z)^{1/2}} \, \d z.
$$
If we substitute $z= x+ \lambda (y-x)$ which ensures $\lambda\in [0,1]$, we have with $a \in (0,1/2)$ 
\begin{equation}\label{eq5Dirac}
I= \frac 1 {(y-x)^{a}} \int_0^1 \bigg[\frac 1 {\lambda^{a}}+ \frac 1 {(1-\lambda)^{a}}\bigg] \frac 1 {\lambda^{1/2}} \frac 1 {(1-\lambda)^{1/2}} \, \d\lambda \leq C \frac 1 {(y-x)^{a}}
\end{equation}
Moreover, we notice that 
\begin{equation}\label{eq6Dirac}
\int_{0\leq s_2\leq s_4\leq \dots\leq s_{2n}\leq 1} \d s_2\dots \d s_{2n} \prod_{j=0}^{n-1} \frac 1 {(s_{2j+2}- s_{2j})^{a}} = \frac {[\Gamma(1-a)]^n}{\Gamma\big(1+n(1-a)\big)}.
\end{equation} 
We now apply \eqref{eq5Dirac} and \eqref{eq6Dirac} to \eqref{eq4Dirac} to obtain, 
\begin{equation}\label{eq7Dirac}
\begin{aligned}
\E_x\bigg[\big|\int_0^1 \d s \mathscr V_{x_1,x_2}(W_s)\big|^{2n}\bigg] \leq (2n)! C^n h^{2na}  \frac {[\Gamma(1-a)]^n}{\Gamma\big(1+n(1-a)\big)}
\end{aligned}
\end{equation}
where 
$\Gamma(1+k)=\int_0^\infty x^k \e^{-x} \d x = k! \sim (k/\e)^k \sqrt{2\pi k}$. Finally, note that, for any $\rho\in (0,2)$, by Jensen's inequality, $\E[\exp\{\alpha X^\rho\}] \leq \sum_{n=0}^\infty \frac{\alpha^n}{n!} \E[X^{2n}]^{\rho/2}$.  Therefore, by \eqref{eq7Dirac}, and by absorbing $\Gamma(1-a)^n$ in $C^n$ on the right hand side, we have 
$$
\begin{aligned}
&\E_x\bigg[\exp\{\alpha\bigg| \int_0^1 \d s \,\frac{\mathscr V_{x_1,x_2}(W_s)}{|x_1-x_2|^{a}}\bigg|^{\rho}\bigg] \\
&\leq \sum_{n=0}^\infty \frac{\alpha^n}{n!} C^{n\rho/2} \bigg[\frac {(2n)!} {\Gamma\big(1+ n(1-a)\big)}\bigg]^{\rho/2}.
\end{aligned}
$$
Since $\rho=\frac 1 {1-\eps}=\frac 2 {1+a} \in (0,2)$, by using Stirling's formula for $(2n)!$ as well as for $\Gamma\big(1+ n(1-a)\big)$, we can choose $\alpha$ small enough if needed to make the last geometric series convergent, proving \eqref{Diracitem1}.

To prove \eqref{Diracitem2}, again we can decompose $\{x_1,x_2\colon |x_1-x_2| \leq 1\} \subset \cup_{n=-\infty}^\infty \big\{x_1,x_2\colon |x_1-n| \leq 1, \, |x_2- n | \leq 1\big\}$, 
and it suffices to show that 

\begin{equation}\label{Diracitem3}
\sum_{n=-\infty}^\infty \,\, \int\int_{\heap{|x_1-n| \leq 1}{ |x_2-n|\leq 1}} \d x_1 \d x_2 \,\, \E_0\bigg[\exp\bigg\{\alpha\bigg(\frac{\Lambda_1(x_1)-\Lambda_1(x_2)}{|x_1-x_2|^a}\bigg)^\rho\bigg\}-1 \bigg] < \infty.
\end{equation} 

If $\tau_n:=\inf\{t\colon W_t \in [n-1,n+1]\}$, by strong Markov property 

$$
\begin{aligned}
& \E_0\bigg[ \1_{\{\tau_n<1\}}\,\, \exp\bigg\{\alpha\bigg(\frac{\Lambda_1(x_1)-\Lambda_1(x_2)}{|x_1-x_2|^a}\bigg)^\rho\bigg\}-1 \bigg] \\
&= \E_0\bigg[ \1_{\{\tau_n<1\}}\,\, \E_{W_{\tau_n}}\bigg\{\exp\bigg\{\alpha\bigg(\frac{\Lambda_{1-\tau_n}(x_1)-\Lambda_{1-\tau_n}(x_2)}{|x_1-x_2|^a}\bigg)^\rho\bigg\}-1\bigg\} \bigg] \\
\end{aligned}
$$
But by the first estimate \eqref{Diracitem1}, 
$$
\E_{W_{\tau_n}}\bigg\{\exp\bigg\{\alpha\bigg(\frac{\Lambda_{1-\tau_n}(x_1)-\Lambda_{1-\tau_n}(x_2)}{|x_1-x_2|^a}\bigg)^\rho\bigg\}-1\bigg\} \leq C <\infty, 
$$
so that for \eqref{Diracitem3} it suffices to show that $\sum_{n=-\infty}^\infty \P_0[\tau_n < 1 ]< \infty$, which follows easily from standard computation using transition density of one dimensional Brownian motion.

\end{proof}

\subsection{\bf Concluding the proof of Lemma \ref{lemma:Lambda}}\label{subsec:lemma:Lambda} 

 Note that, we can write, for any $\kappa>0$, 
\begin{equation}\label{prooflemma3est2.56}
\|\Lambda_1\|_\infty \leq \sup_{x_1, x_2 \in \R^3\colon |x_1-x_2|\leq\kappa} \big|\Lambda_1(x_1)-\Lambda_1(x_2)\big| + \sup_{x\in \kappa\Z^3} \int_0^1 {\d s} {V(W_s -x)}.
\end{equation}
Let us now handle the first summand, for which we would like to apply Lemma \ref{GRR}. We pick $\delta$, $\eps$, $a$ and $\rho$ as in Remark \ref{remark:M} and choose
\begin{equation}\label{choices}
\Psi(x)= \e^{\beta |x|^\rho}-1, \qquad q(x)= |x|^{a }= |x|^{1-2\eps}, \qquad f(x)=\Lambda(x).
\end{equation}
Then $\Psi(\cdot)$, $q(\cdot)$ and $f(\cdot)$ all satisfy the requirements of Lemma \ref{GRR}. Furthermore, Lemma \ref{lemma:M} gurantees that hypothesis \eqref{GRR1} is satisfied if $|x_1- x_2|\leq \kappa$ and $\kappa>0$ is chosen small enough . Hence, \eqref{GRR2} implies that for any fixed constant $\gamma>0$, 
\begin{equation}\label{estimate:GRR}
\sup_{|x_1-x_2|\leq \kappa} \big|\Lambda_1(x_1)-\Lambda_1(x_2)\big| \leq \frac{(1-2\eps)}{\alpha^{1/\rho}} \int_0^\kappa \log\bigg(1+ \frac M {\gamma u^{2d}}\bigg)^{1/\rho} \, u^{-2\eps} \, \d u
\end{equation}
Now if we choose $\kappa$ small enough, then the right hand side above is smaller than 
$$
\frac{1-2\eps}{\alpha^{1/\rho}} C(\kappa) \log\big(M\vee 1\big)^{1/\rho}
$$
for some constant $C(\kappa)$ which goes to $0$ as $\kappa\to 0$. Hence, for any $C>0$, by \eqref{prooflemma3est2.57}, we have
$$
\E\bigg\{\e^{C\sup_{ |x_1-x_2|\leq\kappa} \big|\Lambda_1(x_1)-\Lambda_1(x_2)\big|}\bigg\}<\infty.
$$

Let us turn to the second term on the right hand side of \eqref{prooflemma3est2.56}. Since we are interested in the behavior 
of the path in the time horizon $[0,1]$, it is enough to estimate the supremum in a bounded box. We will show that, for any fixed $\kappa>0$ and any $C>0$,
\begin{equation}\label{prooflemma3est2.58}
\E\bigg[\sup_{\heap{x\in \kappa\Z^3}{|x\leq 2}}\exp\bigg\{ C\int_0^1  {\d s} V(W_s -x) \bigg\}\bigg] \leq (2/\delta)^3\E\bigg[\exp\bigg\{ C\int_0^1 {\d s} V(W_s )\bigg\}\bigg] <\infty.
\end{equation}
Let us first prove the estimate for $V(x)=\frac 1 {|x|}$ when $d=3$. The other cases will follow a very similar strategy. For any $\eta>0$, we can write $1/|x|= V_\eta(x)+ Y_\eta(x)$ for $V_\eta(x)= 1/(|x|^{2}+\eta^{2})^{1/2}$. Since, for any fixed $\eta>0$, $V_\eta$ is a bounded function, 
the above claim holds with $V_\eta(W_s)$ replacing $1/|W_s|$. Hence, (by Cauchy-Schwarz inequality, for instance), it suffices to check the above statement with the difference $Y_\eta(W_s)$, which can be written as
$$
\begin{aligned}Y_\eta(x)=\frac{1}{|x|}-\frac{1}{\sqrt {\eta^{2}+|x|^{2}}}
&=\frac{\sqrt {\eta^{2}+|x|^{2}}-|x|}{|x|\sqrt{\eta^{2}+|x|^{2}}}=\frac{\eta^2}{|x|+\sqrt {\eta^2+|x|^2}}\,\,\frac{1}{\sqrt {\eta^2+|x|^2}}\,\,\frac{1}{|x|}\\
&=\eta^{-1}\phi\bigg(\frac{x}{\eta}\bigg),
\end{aligned}
$$
with
$$
\phi(x)=\frac{1}{|x|}\,\,\frac 1{\sqrt{1+|x|^2}}\,\, \frac 1{|x|+\sqrt{1+|x|^2}}.
$$
One can bound  $\phi(x)$ by $\frac{b}{|x|^\frac{3}{2}}$, since it behaves like $\frac{1}{|x|}$ near $0$ and like $\frac{1}{|x|^3}$ near $\infty$. In particular 
$$
Y_\eta(x)\le \frac{b\sqrt{\eta}}{|x|^\frac{3}{2}}.
$$
Hence, for \eqref{prooflemma3est2.58}, it suffices to show, for $\eta>0$ small enough and any $C>0$,
\begin{equation}\label{prooflemma3est2.59}
\E\bigg[\exp\bigg\{C b \sqrt\eta \int_0^1 \frac{\d s}{|W_s|^{3/2}}\bigg\}\bigg] <\infty.
\end{equation}
For this, we appeal to Khas'minski's lemma  which states that, if for a Markov process $\{\P^{\ssup x}\}$ and for a function $\widetilde V\ge 0$
$$
\sup_{x\in \R^3} \E^{\ssup x}\bigg\{\int_0^1 \widetilde V(W_s)\d s\bigg\}\leq \gamma<1
$$
then
$$
\sup_{x\in \R^3} \E^{\ssup x}\bigg\{\exp\bigg\{\int_0^1 \widetilde V(W_s)\d s\bigg\}\bigg\}\le \frac{\gamma}{1-\gamma} <\infty.
$$
Hence, to prove \eqref{prooflemma3est2.59}, we need to verify that 
$$
\begin{aligned}
\sup_{x\in \R^3} \E^{\ssup x}\bigg\{\int_0^1 \frac{\d \sigma}{|W_\sigma|^\frac{3}{2}}\bigg\}
=\sup_{x\in \R^3} \int_0^1 \d \sigma\int_{\R^3} \d y \,\, \frac{1}{|y|^\frac{3}{2}} \frac{1}{(2\pi \sigma)^\frac{3}{2} }\exp\bigg\{-\frac{(y-x)^2}{2\sigma}\bigg\}
<\infty.
\end{aligned}
$$
One can see that  
$$
\sup_{x\in \R^3} \int_{\R^3}\d y\frac{1}{|y|^\frac{3}{2}} \frac{1}{(2\pi \sigma)^\frac{3}{2} }\exp\bigg\{-\frac{(y-x)^2}{2\sigma}\bigg\}
$$ 
is attained at $x=0$ because we can rewrite the integral as 
$$ c\int_{\R^3} \exp\bigg\{-\frac{\sigma|\xi|^2}{2}+i\langle x,\xi\rangle\bigg\}\frac{1}{|\xi|^\frac{3}{2}}d\xi,
$$ 
where $c>0$ is a constant. When $x=0$, the integral reduces to $\int_0^1 \sigma^{-3/4} \,\,\d \sigma$, which is finite. This proves \eqref{check1} for $V(x)=1/|x|$ in $\R^3$. The proof for 
$V(x)=|x|^{-p}$ for $p< 2/(d-2)$ in $d\geq 3$ follows the same strategy and  is omitted to avoid repetition. 
For $V(x)=\delta_0(x)$ on $\R$ we have $\Lambda(x)=\int_0^1 \d s \, \delta_{W_s}(x)$, the Brownian local time at $x$. In this case the desired exponential moment \eqref{check1} follows from 
\cite[Chapter XII, 3.8]{RY91}. The proof of Lemma \ref{lemma:Lambda} is thus completed.\qed

\begin{appendix}
\section*{An entropy inequality}\label{appn} 
\begin{lemma}\label{lemma:Pinsker}
For any two probability measures $\mu,\nu$ on a measurable space $(X, \mathcal F)$, let
$$
\mathrm{Ent}(\mu|\nu)= \E^{\mu}\bigg[\log\bigg(\frac{\d\mu}{\d\nu}\bigg)\bigg]= \E^{\nu}\bigg[\frac{\d\mu}{\d\nu}\log\bigg(\frac{\d\mu}{\d\nu}\bigg)\bigg]$$
 if $\mu\ll \nu$ and $\mathrm{Ent}(\mu|\nu)=\infty$ otherwise. Then for any $\lambda>0$, 
\begin{equation}\label{eq2:Pinsker}
\E^\mu(f) \leq \lambda^{-1} \mathrm{Ent}(\mu|\nu)+ \lambda^{-1} \log\E^\nu \big(\e^{\lambda f} \big)
\end{equation} 
Moreover, 
\begin{equation}\label{eq3:Pinsker}
\|\mu-\nu\|_{\mathrm{TV}} \leq \sqrt{\frac 12 \mathrm{Ent}(\mu|\nu)}.
\end{equation}
\end{lemma} 
\begin{proof}
Although both estimates are well-known, we include it here for the sake of completeness. To prove the first estimate, note that by Jensen's inequality, 
$$
\lambda\E^\nu[f]- \mathrm{Ent}(\mu|\nu)= \E^\mu\bigg[\log\bigg(\e^{\lambda f}\, \frac{\d\nu}{\d\mu}\bigg)\bigg] \leq \log\big(\E^\nu[\e^{\lambda f}]\big)
$$
which proves \eqref{eq2:Pinsker}. To prove \eqref{eq3:Pinsker}, it suffices to show that for any probability density $\phi$ on $(X,\mathcal F,\mu)$ (i.e., $\phi\geq 0$ and $\int \phi=1$),
\begin{equation}\label{eq4:Pinsker}
\sup_{\|f\|_\infty\leq 1} \big|\int f \phi \d\mu- \int f \d\mu\big| \leq \sqrt{\frac 12 \int \phi\log \phi\d\mu}. 
\end{equation}
For any $f$ with $\|f\|_\infty\leq 1$ let us define, for any $\lambda\geq 0$, 
$$
m_f(\lambda):= \log \E^\mu\big[\e^{\lambda f}\big], \qquad \mbox{and }\,\,\,\d\mu_{\lambda,f}:= \frac{\e^{\lambda f}}{\E^\mu[\e^{\lambda f}]} \, \d\mu.
$$
Then, 
$$
\frac{\d}{\d\lambda} m_f(\lambda)= \E^{\mu_{\lambda,f}}[f] , \qquad\mbox{and}\,\,\,\, \frac{\d^2}{\d^2\lambda} m_f(\lambda)= \mathrm{Var}^{\mu_{\lambda,f}}\big[f\big] \leq \E^{\mu_{\lambda,f}}\big[f^2\big] \leq 1.
$$
By Taylor's theorem, $m_f(\lambda) \leq m_f(0)+ \lambda m_f^\prime(0)+ \lambda^2/2$, so that 
$$
\lambda^{-1} \log\E^\mu[\e^{\lambda f}] \leq \frac \lambda 2+ \E^\mu[f]
$$
The last estimate, together with the relative entropy inequality \eqref{eq2:Pinsker} implies that 
$$
\int(\phi-1) f \d\mu \leq \frac \lambda 2 + \frac 1 \lambda \int \phi\log\phi \d\mu. 
$$
Now optimizing over $\lambda$ implies 
$
\int(\phi-1) f \d\mu
\leq \sqrt{1/2 \int\phi\log\d\mu}
$
proving the desired estimate \eqref{eq4:Pinsker}.

\end{proof}

\end{appendix}




\begin{acks}[Acknowledgments]
It is a pleasure to thank Volker Betz and Herbert Spohn for their encouragement to pursue this work and many valuable discussions on the Nelson model. The author would also like to thank Erwin Bolthausen, Sabine Jansen and S.R.S. Varadhan for useful comments on an earlier version of the manuscript and Ofer Zeitouni for helpful discussions. Finally, the author would like to thank an anonymous referee for a very careful reading of the earlier version and pointing out a number of inaccuracies which led to a more elaborate version of our manuscript.
\end{acks}

\begin{funding}
The present work is supported by the Deutsche Forschungsgemeinschaft (DFG) under Germany's Excellence Strategy {\it EXC 2044--390685587, Mathematics M\"unster: Dynamics--Geometry--Structure}. 
\end{funding}

\end{document}